\NewDocumentCommand{\mybar}{ O{0.860} O{0.3pt} m }{
    \mathrlap{\hspace{#2}\overline{\scalebox{#1}[1]{\phantom{\ensuremath{#3}}}}}\ensuremath{#3}
}
\newcommand{\cS}{\mathcal{S}}
\newcommand{\cR}{\mathcal{R}}
\newcommand{\cH}{\mathcal{H}}
\newcommand{\cM}{\mathcal{M}}
\newcommand{\cT}{\mathcal{T}}
\newcommand{\dens}{d} 
\renewcommand{\pod}[1]{\allowbreak\mathchoice
  {\if@display \mkern 18mu\else \mkern 2mu\fi (#1)}
  {\if@display \mkern 6mu\else \mkern 2mu\fi (#1)}
  {\mkern4mu(#1)}
  {\mkern4mu(#1)}
}
\newcommand{\legendre}[2][p]{\ensuremath{\left( \frac{#2}{#1} \right) }}
\newcommand{\sdfrac}[2]{\mbox{\small$\displaystyle\frac{#1}{#2}$}}
\newcommand{\ZZ}{\mathbb{Z}}
\newcommand{\FF}{\mathbb{F}}
\theoremstyle{plain}
\newtheorem{theorem}{Theorem}
\newtheorem{lemma}{Lemma}[section]
\newtheorem{proposition}{Proposition}[section]
\theoremstyle{remark}
\newtheorem{remark}{Remark}[section]
\theoremstyle{definition}
\definecolor{orange}{rgb}{1,0.5,0}
\definecolor{Ggreen}{rgb}{0.,0.575,0.0128}
\definecolor{Bblue}{rgb}{0.016,.132,0.81}
\def\mysequence#1{\expandafter\@mysequence\csname c@#1\endcsname}
\def\@mysequence#1{%
  \ifcase#1\or left\or right\or altceva\else\@ctrerr\fi}
\begin{document}

\title[A lozenge triangulation of the plane with integers]
{A lozenge triangulation of the plane with integers}

\author{Raghavendra N. Bhat}
\address[Raghavendra N. Bhat]{Department of Mathematics, University of Illinois at Urbana-Champaign, 1409 West Green 
Street, Urbana, IL 61801, USA}
\email{rnbhat2@illinois.edu}

\author[Cristian Cobeli]{Cristian Cobeli\textsuperscript{*}}
\address[Cristian Cobeli]{``Simion Stoilow'' Institute of Mathematics of the Romanian Academy,~21 Calea Grivitei Street, P. O. Box 1-764, Bucharest 014700, Romania}
\email{cristian.cobeli@imar.ro}

\thanks{\textsuperscript{*}Corresponding author: Cristian Cobeli: \texttt{cristian.cobeli@gmail.com}}

\author{Alexandru Zaharescu}
\address[Alexandru Zaharescu]{Department of Mathematics, University of Illinois at Urbana-Champaign, 1409 West Green 
Street, Urbana, IL 61801, USA,
%
and 
``Simion Stoilow'' Institute of Mathematics of the Romanian Academy,~21 
Calea Grivitei 
Street, P. O. Box 1-764, Bucharest 014700, Romania}
\email{zaharesc@illinois.edu}

\subjclass[2020]{Primary 11B37; Secondary 11B50, 52C20}


\thanks{Key words and phrases: dynamical lozenge tiling with integers, {Löschian} numbers, rhombus number tiling, modular prime covering, lattice path}

\begin{abstract}
We introduce and study a three-folded linear operator depending on three parameters
that has associated a triangular number tilling of the plane.
As a result the set of all triples of integers is decomposed in classes of equivalence organized in
four towers of two-dimensional triangulations. 
We provide the full characterization of the represented integers belonging to each network
as families of certain quadratic forms. 
We note that one of the towers is generated by a germ that produces a covering of the plane 
with {Löschian} numbers.
\end{abstract}

\maketitle

\section{Introduction}\label{Introduction}
The study of discrete dynamical systems has gained attention because of
their capacity to model complex phenomena through the iterative 
application of simple rules. 
Understanding the common features and applications of these systems can provide valuable insights into the broader field and their practical implications.

A few examples that have been discussed again in recent times are
related to various aspects of the Ducci game rule (see~\cite{CCZ2000,CZ2014,CPZ2016}),
the study of phenomena occurring in Pascal-like triangles (see~\cite{Pru2022,CZ2013}) and to the contrasting patterns
produced by the iteration of the PG~\cite{BCZ2023,CZZ2013} operator that calculates the gaps between
the neighbor elements of a sequence in relation to the Proth-Gilbreath 
Conjecture~\cite{Pro1878,Guy2004,Guy1988,Gil2011}.

 In this context, we introduce here the following three-folded linear operator that has nonlinear characteristics.
Let $H=H(x,y,x)$ be the set $H=\{H',H'',H'''\}$ of transformations
that each leaves two components unchanged while to the negative third adds $1$ and the other two, that is,
\begin{equation}\label{eqHHH}
   \begin{split}
    H'(x,y,z)&=(-x+1+y+z,y,z),\\
        H''(x,y,z)&=(x,-y+1+z+x,z),\\
        H'''(x,y,z)&=(x,y,-z+1+x+y).
   \end{split}
\end{equation}

As we will show, the repeated application of any combination of these operators
produce a set of interconnected integers with remarkable properties. 
A special instance of these are the {Löschian} numbers.
Introduced in an economic model~\cite[Chapter 10]{Loc1940},
in which the producers and consumers are organized in a
convenient hexagonal network, {Löschian} numbers appear also in the more 
general geographical Central Place theory~\cite{Bat2013, Mar1977, BGNR2023}.
Their special properties make them useful in other 
theoretical~\cite{CD2014,Gol1935,Gol1937,CRS1999,KRNG2024} or 
practical contexts 
(see~\cite[Chapter 3]{Pet1998}, \cite{AA1989,Pet1998,Rus2017}
and the references mentioned there).

We denote by $H^{[n]}$, $n\ge 0$, the iterations of~$H$, which are the successive 
compositions of~$H$, where at any step any of $H',H'',H'''$ is applied. 
Thus, if $(a,b,c)\in\ZZ^3$, then
$H^{[n]}(a,b,c)$ contains $3^n$ triples of integers, not necessarily distinct.
For example, $H^{[0]}(1,2,3) = \{(1,2,3)\}$ and 
$(5,9,5)$ is one of the $243$ elements of $H^{[5]}(1,2,3)$, because
\begin{equation}\label{eqExample}
  \begin{split}
    (1,2,3) & \xrightarrow{\text{ $H'$ }} (5,2,3)
             \xrightarrow{\text{ $H'''$ }} (5,2,5)
             \xrightarrow{\text{ $H''$ }} (5,9,5)\\
             &
             \xrightarrow{\text{ $H'$ }} (10,9,5)
             \xrightarrow{\text{ $H'$ }} (5,9,5).      
  \end{split}
\end{equation}
Let $\cT_H(a,b,c)$ denote the union of all these triples, that is,
\begin{equation}\label{eqT}
    \cT_H(a,b,c) := \bigcup_{n\ge 0} H^{[n]}(a,b,c)\,.
\end{equation}
We note that if the union were disjoint, then it would be possible for a triple to repeat, because for example if the sum of two components, let's say $a$ and $b$, is odd and the other is $c=\frac{a+b+1}{2}$, then 
$H'''(a,b,c)=(a,b,c)$. Other type of repetitions also occur, all due
to the symmetries created by all possible orders in which operators~\eqref{eqHHH} are applied.

We say that an integer is \textit{represented} by $H^{[n]}(a,b,c)$ if it appears as 
a component of a triple in $\cT_H(a,b,c)$. 
Since a represented integer can appear multiple times in the triples from $\cT_H(a,b,c)$,
we call the \textit{length} of $m$ as
the smallest possible number of compositions required to go from $(a,b,c)$ to 
a triple that has $m$ as a component.
Thus, following the sequence in~\eqref{eqExample}, we see that $1,2$ and $3$
have length $0$,
then $5$ has length $1$, and it can be checked that~$9$ has length $3$, as there is no shorter
path to obtain it. However, the length of $10$ is not $4$, but also~$3$, since $10$ is represented
in this shorter branch:
$ (5,2,3) \xrightarrow{\text{ $H''$ }} (5,7,3)
             \xrightarrow{\text{ $H'''$ }} (5,7,10)$.

Let $\cR_H(a,b,c)$ denote the set of all integers represented by $H(a,b,c)$, that is,
\begin{equation*}
    \cR_H(a,b,c) := \big\{ m \in \{x,y,z\} :  (x,y,z) \in\cT_H(a,b,c)\big\}\,.
\end{equation*}
Given a triple $(a,b,c)\in\ZZ^3$, from the way definitions~\eqref{eqHHH} are introduced, one should not expect that all or at least most integers are represented in
$\cR_H(a,b,c)$.
And the reason is not the fact that two variables are added and only one subtracted in formulas~\eqref{eqHHH},
and therefore depending on the signs of $a, b, c$ very small or very large numbers would not be represented. 
The real main factor is actually the addition of $1$ in formulas~\eqref{eqHHH}, 
which always makes that only very few small numbers are represented.
The following theorem proves this fact for numbers less than any given threshold.
 \begin{theorem}\label{TheoremA}
Let $M$ be a fixed integer. Than, 
$\cR_H(a,b,c)\cap (-\infty,M]$ is finite for any $a,b,c\in\ZZ$. 
\end{theorem}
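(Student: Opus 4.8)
The plan is to find a polynomial invariant of the three maps and then to show that, on each of its level sets, every coordinate is bounded below; a represented integer therefore lies in a bounded interval, which together with the cutoff $M$ forces finiteness.

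\emph{Step 1: a quadratic invariant.} Each of $H',H'',H'''$ fixes two coordinates and sends the third to its ``reflection''; e.g.\ $H'$ replaces $x$ by $w=y+z+1-x$, so that $w+x=y+z+1$. Guided by this Vieta/Markoff-type shape, consider
\[
  Q(x,y,z) := x^2+y^2+z^2-xy-yz-zx-x-y-z .
\]
Then $Q(w,y,z)-Q(x,y,z)=(w-x)(w+x-y-z-1)=0$ because $w+x=y+z+1$; since $Q$ is symmetric in its arguments, the same identity gives $Q\circ H''=Q$ and $Q\circ H'''=Q$. Hence $Q$ is constant on each orbit: for all $(x,y,z)\in\cT_H(a,b,c)$ one has $Q(x,y,z)=K$, where $K:=Q(a,b,c)$ is a fixed integer. (Alternatively $Q$ can be located by undetermined coefficients, writing $Q=x^2+y^2+z^2+\lambda(xy+yz+zx)+\mu(x+y+z)+\nu$ and checking that invariance forces $\lambda=\mu=-1$.)

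\emph{Step 2: coordinates are bounded below on $\{Q=K\}$.} Rewrite $Q=\tfrac12\big[(x-y)^2+(y-z)^2+(z-x)^2\big]-(x+y+z)$. From $Q=K$ and the elementary inequality $(x-y)^2+(z-x)^2\ge\tfrac12(2x-y-z)^2$ one gets
\[
  K+x+y+z=\tfrac12\big[(x-y)^2+(y-z)^2+(z-x)^2\big]\ \ge\ \tfrac14(2x-y-z)^2 .
\]
Setting $p:=y+z$, this becomes $\tfrac14p^2-(x+1)p+(x^2-x-K)\le 0$, a quadratic inequality in $p$ that can hold for real $p$ only if its discriminant is nonnegative, i.e.\ $(x+1)^2-(x^2-x-K)\ge 0$, i.e.\ $3x+1+K\ge 0$. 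Thus $x\ge-\tfrac{K+1}{3}$ on the level set $\{Q=K\}$, and by symmetry the same lower bound holds for $y$ and $z$.

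\emph{Conclusion and obstacle.} Every integer represented by $H(a,b,c)$ is a coordinate of some $(x,y,z)\in\cT_H(a,b,c)$, hence is at least $-\tfrac{Q(a,b,c)+1}{3}$; therefore $\cR_H(a,b,c)\cap(-\infty,M]$ is contained in the finite set of integers in $\big[-\tfrac{Q(a,b,c)+1}{3},\,M\big]$. (Geometrically, $\{Q=K\}$ is an elliptic paraboloid with axis along $(1,1,1)$ and the half-space $\{x\le M\}$ is transverse to that axis, so it meets the paraboloid in a bounded set; the discriminant computation is a bare-hands version of this picture.) The only step that is not purely mechanical is \emph{recognizing} that the operators $H',H'',H'''$ are Vieta-type involutions attached to $Q$; once $Q$ is in hand, the rest is routine estimation.
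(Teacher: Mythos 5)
Your proof is correct. The invariant checks out: writing $w=-x+1+y+z$ for the image coordinate under $H'$, one has $w+x=y+z+1$, hence $Q(w,y,z)-Q(x,y,z)=(w-x)(w+x-y-z-1)=0$ for $Q(x,y,z)=x^2+y^2+z^2-xy-yz-zx-x-y-z$, and the symmetry of $Q$ handles $H''$ and $H'''$. The discriminant argument then correctly yields $x\ge -\tfrac{K+1}{3}$ on $\{Q=K\}$, so every represented integer lies in $\big[-\tfrac{Q(a,b,c)+1}{3},\,M\big]$, which is finite. (As a sanity check, $Q(0,1,1)=-1$ and $Q(0,0,0)=0$ give lower bounds $0$ and $-\tfrac13$, consistent with the paper's fundamental sets both starting at $0$.)

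This is, however, a genuinely different route from the paper's. The paper proves Theorem 1 by first establishing the consistency of the lozenge tiling and deriving an explicit closed-form parametrization of all weights, $G(a,b,c\mid m,n)=-(m+n-1)a+mb+nc+(m^2+n^2+mn-m-n)$, and then observing that the positive-definite quadratic part $m^2+n^2+mn$ forces all but finitely many nodes to carry weights exceeding any threshold $M$. Your argument replaces this global parametrization by a local conserved quantity: $Q$ is a Vieta-type first integral of the three involutions, and boundedness below is read off from a single level set without ever needing to know that the orbit organizes itself into a consistent planar tiling. What you lose is the extra information the parametrization provides (the exact quadratic forms representing the weights, the residue densities of Theorem 2, the identification of the Löschian numbers); what you gain is a shorter, self-contained proof of finiteness, and your invariant $Q$ is in fact the translation-covariant shadow of the paper's quadratic form, so it could be cited there as an independent explanation of why the fundamental sets are bounded below.
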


Let us note that the sets of represented numbers by $H$ are actually closely related to one another 
by the equality
\begin{equation}\label{eqTranslated}
    \cR_H(a+h,b+h,c+h) = \cR_H(a,b,c) +h
\end{equation}
for any $a,b,c,h\in\ZZ$.
Indeed, this follows since if the triple $(A,B,C)$ is obtained from $(a,b,c)$ through the 
sequence of operations $H^{[n]}$, then through exactly the same sequence of operations we obtain (see Lemma~\ref{LemmaTR})
\begin{equation*}
    (a+h,b+h,c+h) \xrightarrow{\text{ $H^{[n]}$ }} (A+h,B+h,C+h)\,.
\end{equation*}
In Section~\ref{SectionGerms} we show that essentially there are only two distinct
sets of representatives, namely
$\cR_H(0,0,0)$ and $\cR_H(0,1,1)$,  
and all the others are obtained by translations~\eqref{eqTranslated}. 
A combined graphical representation of the \textit{fundamental sets} 
$\cR_H(0,0,0)$ and $\cR_H(0,1,1)$
and the set of triples
$\cT_H(0,0,0)$ and $\cT_H(0,1,1)$
is shown in Figure~\ref{FigureTwoGerms}.

These tessellations of the plain with integers have
 the particular characteristic that in any basic adjacent triangles that together form a \textit{lozenge} (diamond consisting of any four close circles positioned in such a way that each circle is adjacent to at least two of the remaining three) 
 the sum of the numbers in the nodes on the longer diagonal 
 is with~$1$ larger than the sum of the numbers on the shorter diagonal.
\begin{figure}[ht]
    \includegraphics[width=0.49\textwidth]{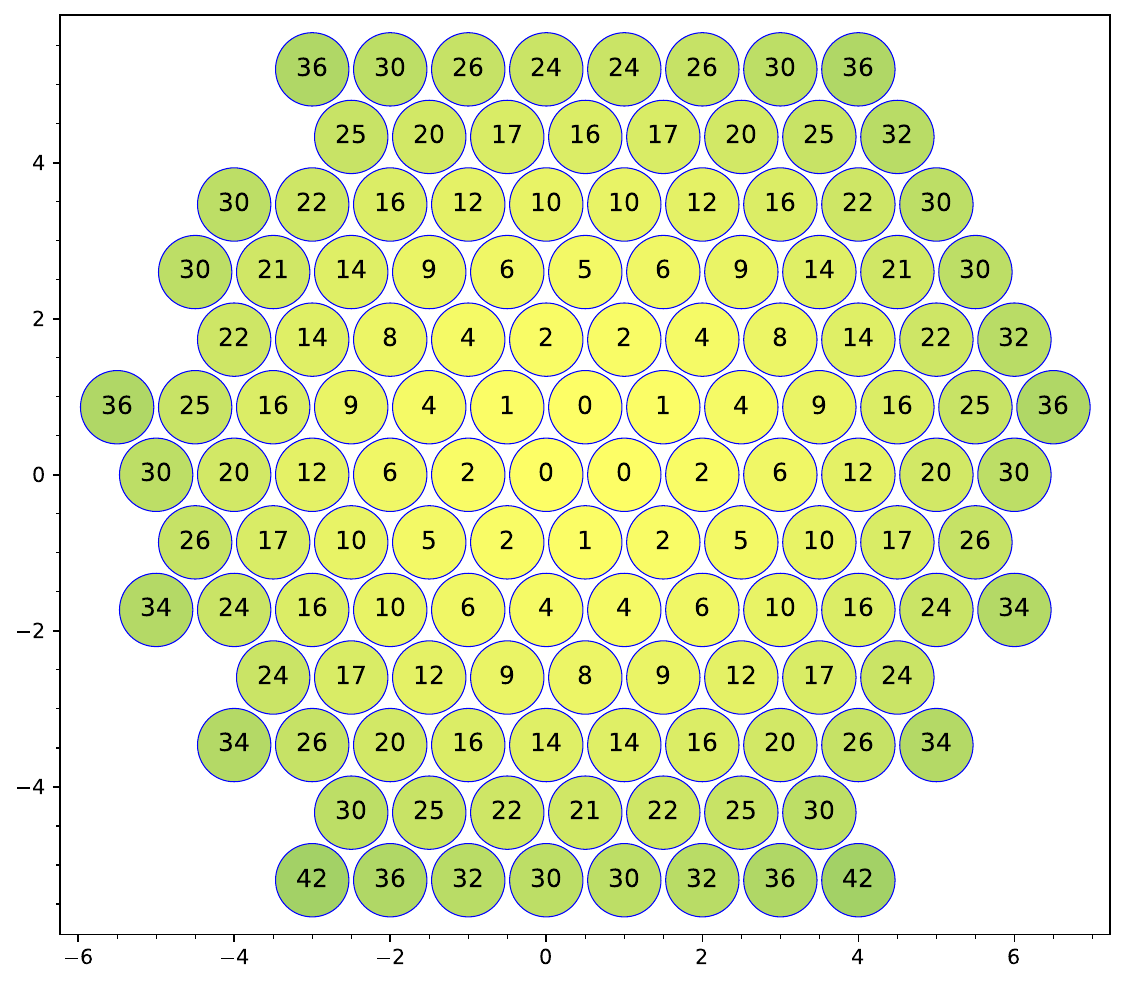}
    \includegraphics[width=0.49\textwidth]{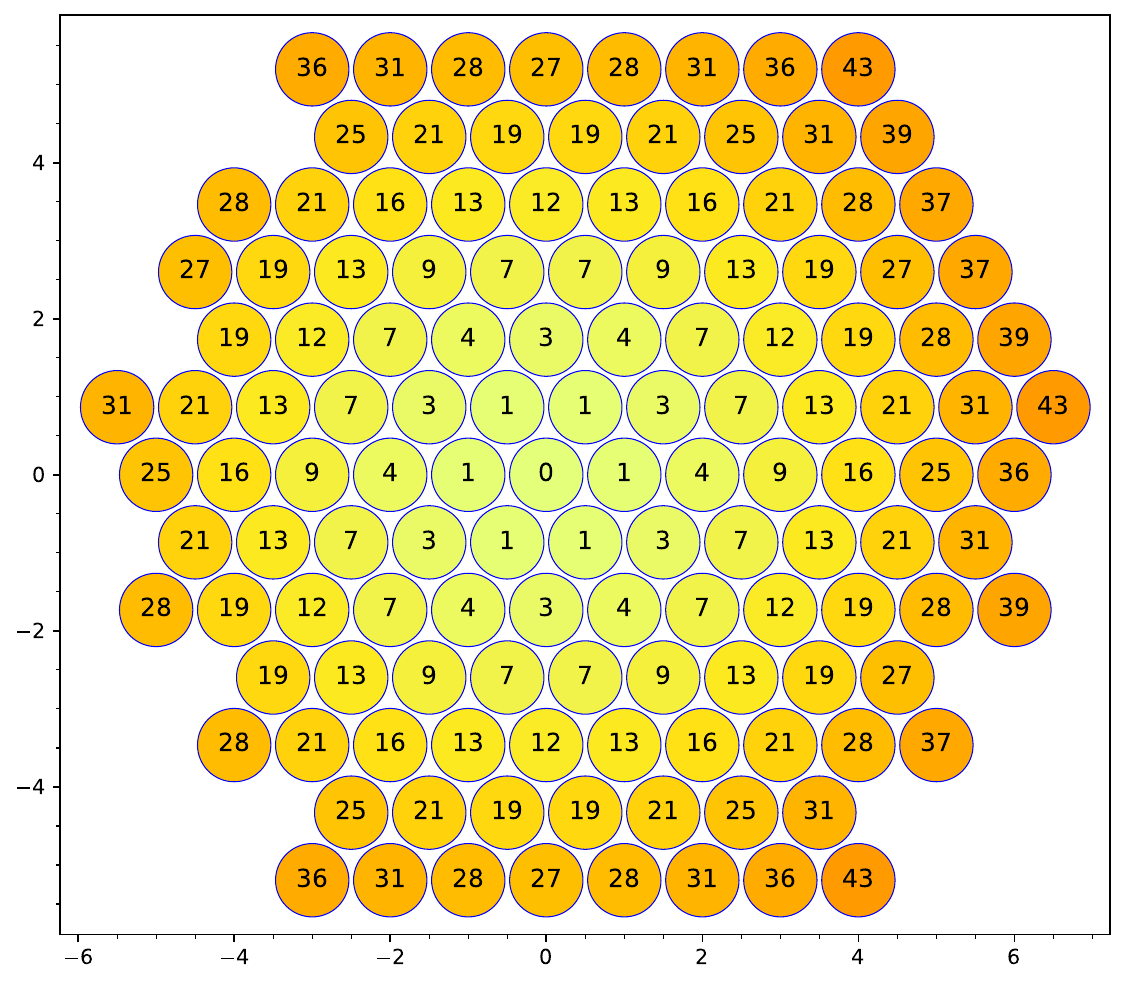}
\caption{The geometrical representation of the core of $\cR_H(0,0,0)$ and $\cR_H(0,1,1)$
as the sets of nodes in the tiling of the plane with triples in
$\cT_H(0,0,0)$ and $\cT_H(0,1,1)$.
}
 \label{FigureTwoGerms}
 \end{figure}
 
Lozenge tilings of the plane or of some distinguished
domains with special characteristics have been intensively studied~\cite{Ciucu2009,CL2019,CLR2021,CECZ2001,Buy2022}, in part due to the connection with practical applications of a random tiling
model for two dimensional electrostatics~\cite{Ciucu2005, CF2023}.
While the main focus in these mentioned works is the evaluation of the number of distinct tilings,
our objectives include characterizing the weights in the nodes of a network generated by a certain 
triple, classifying the networks of numbers and highlighting certain unique paths that connect the network nodes.

Once we observe that any integer $a$ is represented in $\cR_H(*,*,*)$, 
with $a$ being one of the stars, the natural problem is to find
an efficient algorithm to decide whether or not a positive $a$ belongs to 
$\cR_H(0,0,0)$, $\cR_H(0,1,1)$, $\cR_H(1,0,1)$ or $\cR_H(1,1,0)$. 
For example, one finds that  $2024\not\in \cR_H(0,1,1)$,
but $2023\in \cR_H(0,1,1)$
(see the graphical representation in Figure~\ref{Fig2Examples}). For~$2023$, there are infinitely many 
nontrivial paths to get from $(0,1,1)$ to a triple that represents
$2023$, but one of the shortest takes $99$ steps.
Such a path passes by the intermediate triple $(1089,1156,1123)$ through 
the sequence of operations:
\begin{equation*}
    (0,1,1)  
    \xrightarrow{
    \text{ $\big(H''\circ H'''\circ H'\big)^{[22]}$ }} 
    (1089,1151,1123),
\end{equation*}
and then, changing the direction, reaches the target through:
\begin{equation*}
    (0,1,1)  
    \xrightarrow{
    \text{ $\big(H'''\circ H''\circ H'\big)^{[11]}$ }
    \circ
    \text{ $\big(H''\circ H'''\circ H'\big)^{[22]}$ }} 
    (1956,1939,2023).
\end{equation*}




Searching for common patterns among the sets of triples $\cT_H(a,b,c)$, one
can sometimes notice that starting from two different triples 
$(a_1,b_1,c_1)$ and $(a_1,b_1,c_1)$,
and performing an identical sequence of operations, 
it can happen that one of the components of the resulting triples 
ends up being the same.
For example, $G=H'''\circ H''\circ H'''\circ H'$ transforms
$(1, 3, 6)$ and $(6, 7, 9)$ into two triples that have the third component
equal to $17$:
\begin{equation}\label{eqExample2}
  \begin{split}
    (1, 3, 6) &\xrightarrow{\text{ $H'$ }} (9, 3, 6)
             \xrightarrow{\text{ $H'''$ }} (9, 3, 7)
             \xrightarrow{\text{ $H''$ }} (9, 14, 7)
             \xrightarrow{\text{ $H'''$ }} (9, 14, 17)\\
    (6, 7, 9) &\xrightarrow{\text{ $H'$ }}(11, 7, 9)
             \xrightarrow{\text{ $H'''$ }} (11, 7, 10)
             \xrightarrow{\text{ $H''$ }} (11, 15, 10)\\
             &
             \xrightarrow{\text{ $H'''$ }} (11, 15, 17)\,.
  \end{split}
\end{equation}
Continuing from this point, the search reveals that even more triples 
obtained in a similar way satisfy the same property. 
Thus, in the example above, one  even finds an infinite sequence of such
triples obtained through a periodic series of transformations.
Indeed, if $F=H'''\circ H'\circ H'''\circ H''$, then the third component of 
the sequences
$\{F^{(k)}(9, 14, 17)\}_{k\ge 0}$ and $\{F^{(k)}(11, 15, 17)\}_{k\ge 0}$
are equal, and the sequence of these equal components starts with 
$17, 19, 27, 41,$
$ 61,$
$ 87, 119, 157, 201, 251, 307,\dots$,
the general formula for the general term being
$\{3k^2 - k+ 17\}_{k\ge 0}$.
In the following remark we note that this observation is universally valid.

 \begin{remark}\label{Remark11}
Let $(a_1,b_1,c_1)$ and $(a_2,b_2,c_2)$ be triples of integers.
Suppose $G$ is a composition of a finite sequence of operators from $\{H',H'',H'''\}$ and
$G(a_1,b_1,c_1) = (A,*,*)$ and $G(a_2,b_2,c_2) = (A,*,*)$, 
where $A\in\ZZ$, and the~$*$'s may represent any integer.
Then, there exists an infinite sequence of distinct integers $\{A_n\}_{n\ge 0}$ 
and a sequence $\{G_n\}_{n\ge 0}$, where each~$G_n$ is a sequence of compositions  of
$H', H'', H'''$, such that
\begin{equation*}
    G_n(a_1,b_1,c_1) = (A_n,*,*) = G_n(a_2,b_2,c_2)\ \ \text{ for $n\ge 0$.}
\end{equation*}
\end{remark}
We will revisit this remark in Section~\ref{subsection2D} after we will
obtain the characterization of the set of triples $\cT_H(a,b,c)$.

The next result shows that the represented numbers in both 
fundamental tessellations $\cT_H(0,0,0)$ and $\cT_H(0,1,1)$
cover all the residue classes modulo any prime $p\ge 5$.
The graphical representations $\mod p$ reveal intricate patterns 
(see Figures~\ref{FigGermsMod2331Ales} and~\ref{FigGermsMod2357}).
Note that while the density of the represented weights in residue classes is uniform, there are two particular residue classes 
where the density is either very small or very large compared to the others.

 \begin{theorem}\label{TheoremC}
Let $p$ be prime and let $\dens_p(u)$ denote the limit density of 
the residue class $u = R \pmod p$
of the represented integers in $R\in \cR_H(a,b,c)$, for $u=0,1,\dots,p-1$.
Then:

(1) If the germ is $(0,0,0)$, then the values of the density are:
\begin{itemize}\setlength\itemsep{5pt}
    \item [$\circ$]
If $p=2$, then $d_2(0) = 3/4$ and $d_2(1) = 1/4$.
    \item  [$\circ$]
If $p=3$, then $d_3(0) = 3/9$ and $d_3(1) = 3/9$ and $d_3(2) = 3/9$.
    \item  [$\circ$]
If $p\ge 5$ and $p\equiv 1\pmod 6$, then $d_p\big((p-1)/3\big) = (2p-1)/p^2$     and  $d_p(u) = (p-1)/p^2$ for $u=1,\dots,p-1$ and $\neq (p-1)/3$.
   
    \item  [$\circ$]
If $p\ge 5$ and $p\equiv 5\pmod 6$, then $d_p\big( (2p-1)/3\big) = 1/p^2$ and  $d_p(u) = (p+1)/p^2$ for $u=0,\dots,p-1$ and $\neq (2p-1)/3$.
\end{itemize}

\vspace{1mm}
(2) If the germ is $(0,1,1)$, then the values of the density are:
\begin{itemize}\setlength\itemsep{5pt}
    \item  [$\circ$]
If $p=2$, then $d_2(0) = 1/4$ and $d_2(1) = 3/4$.
    \item  [$\circ$]
If $p=3$, then $d_3(0) = 3/9$ and $d_3(1) = 6/9$ and $d_3(2) = 0$.
    \item  [$\circ$]
If $p\ge 5$ and $p\equiv 1\pmod 6$, then $d_p(0) = (2p-1)/p^2$ and  $d_p(u) = (p-1)/p^2$ for $u=1,\dots,p-1$.

    \item  [$\circ$]
If $p\ge 5$ and $p\equiv 5\pmod 6$, then $d_p(0) = 1/p^2$ and  $d_p(u) = (p+1)/p^2$ for  $u=1,\dots,p-1$.
\end{itemize}
\end{theorem}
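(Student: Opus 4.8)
The plan is to derive Theorem~\ref{TheoremC} from the explicit description of $\cR_H$ obtained in Section~\ref{SectionGerms}, turning each density into a point count on a conic over $\FF_p$. By the translation identity~\eqref{eqTranslated} and the invariance of $\cR_H$ under permuting the three coordinates of the germ (the family $H$ in~\eqref{eqHHH} is symmetric in $x,y,z$), the density $\dens_p$ for an arbitrary germ is a cyclic relabelling of the one for $(0,0,0)$ or $(0,1,1)$, so it suffices to treat these two. The characterization of the tiling then says that the value at the node indexed by $(m,n)\in\ZZ^2$ is $f(m,n)$ for a fixed binary quadratic polynomial $f$: for the germ $(0,1,1)$ one has $f(m,n)=m^2+mn+n^2$ (so $\cR_H(0,1,1)$ is exactly the set of {Löschian} numbers), and for $(0,0,0)$ a form of the same discriminant $-3$ with an inhomogeneous term, which after completing the square satisfies $12f(m,n)+4=A^2+3B^2$ for a substitution $(m,n)\mapsto(A,B)$ invertible modulo every $p\ge5$. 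Since $f$ is quadratic, $f(m+p,n)\equiv f(m,n)\equiv f(m,n+p)\pmod p$, so counting tiling nodes in a large box gives
\[
  \dens_p(u)=\frac{1}{p^{2}}\,\#\bigl\{(m,n)\in(\ZZ/p\ZZ)^{2}\ :\ f(m,n)\equiv u\pmod p\bigr\}.
\]

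For $p\ge5$ the integers $2,3$ are units, so after the substitution the congruence $f(m,n)\equiv u$ becomes $A^{2}+3B^{2}\equiv c\pmod p$ with $(A,B)$ running over all of $(\ZZ/p)^{2}$, where $c\equiv 4u$ for the germ $(0,1,1)$ and $c\equiv 12u+4$ for $(0,0,0)$. Using $\#\{A:A^{2}=t\}=1+\bigl(\tfrac{t}{p}\bigr)$ and the elementary evaluation $\sum_{B}\bigl(\tfrac{c-3B^{2}}{p}\bigr)=-\bigl(\tfrac{-3}{p}\bigr)$ for $c\not\equiv0$ and $(p-1)\bigl(\tfrac{-3}{p}\bigr)$ for $c\equiv0$ (with the convention $\bigl(\tfrac0p\bigr)=0$), one gets
\[
  \#\{A^{2}+3B^{2}\equiv c\}=
  \begin{cases} p-\bigl(\tfrac{-3}{p}\bigr), & c\not\equiv0\pmod p,\\[2pt]
  p+(p-1)\bigl(\tfrac{-3}{p}\bigr), & c\equiv0\pmod p.\end{cases}
\]
By quadratic reciprocity $\bigl(\tfrac{-3}{p}\bigr)=\bigl(\tfrac{p}{3}\bigr)$, which is $+1$ for $p\equiv1\pmod6$ and $-1$ for $p\equiv5\pmod6$; dividing by $p^{2}$ yields the four generic values $(p-1)/p^{2}$, $(2p-1)/p^{2}$, $(p+1)/p^{2}$, $1/p^{2}$ in the statement. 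The exceptional residue class is the one with $c\equiv0$: for $(0,1,1)$ it is $u\equiv0\pmod p$, and for $(0,0,0)$ it is $u\equiv-1/3\pmod p$, which equals $(p-1)/3$ when $p\equiv1\pmod6$ (as $3\cdot\tfrac{p-1}{3}\equiv-1$) and $(2p-1)/3$ when $p\equiv5\pmod6$ (as $3\cdot\tfrac{2p-1}{3}\equiv-1$), precisely as claimed.

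The primes $p=2,3$ are handled separately because completing the square is unavailable, which is exactly the structural reason these primes behave differently; the remaining check is finite. Reducing $f$ modulo $2$ (using $x^{2}\equiv x$) gives $m+mn+n$ for the germ $(0,1,1)$ and $mn+n$ for $(0,0,0)$, whose tables over the four residues of $(m,n)$ give $(\dens_2(0),\dens_2(1))=(1/4,3/4)$, respectively $(3/4,1/4)$. Reducing modulo $3$ (using $m^{2}+mn+n^{2}\equiv(2m+n)^{2}$) turns $(0,1,1)$ into the square $(2m+n)^{2}$, which over $(\ZZ/3)^{2}$ takes the value $0$ three times and $1$ six times, giving $(3/9,6/9,0)$ — in particular recovering that no {Löschian} number is $\equiv2\pmod3$ — and turns $(0,0,0)$ into $(2m+n)^{2}+m$, which hits each residue three times, giving $(3/9,3/9,3/9)$.

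The point counting is classical; the substantive input is everything imported from Section~\ref{SectionGerms}, namely that $\cR_H(0,0,0)$ and $\cR_H(0,1,1)$ are globally the value set of a single binary quadratic form, that the tiling nodes are indexed bijectively by $\ZZ^2$, and hence that the ``limit density'' is literally the per-residue proportion $\tfrac1{p^2}\#\{f\equiv u\}$. The main obstacle lies there rather than in the above computation: if instead one reads $\dens_p(u)$ as the relative density of $\cR_H$ inside $\ZZ$ (each represented integer counted once, ordered by size), one additionally needs a Landau-type estimate that the number of values of the form up to $N$ in each progression $u\bmod p$ is asymptotic to a fixed constant times $N/\sqrt{\log N}$, so that the $\sqrt{\log N}$ factors cancel in the ratio; for $p\equiv5\pmod6$ the fact that such a prime divides a {Löschian} number only to even powers is precisely what forces the anomalously small $\dens_p(0)=1/p^{2}$. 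I expect this normalization bookkeeping, not anything in the conic count, to be the one delicate point.
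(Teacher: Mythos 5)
Your proposal is correct and follows essentially the same route as the paper: via the parametrization of Theorem~\ref{LemmaDL} the density becomes the count of $(m,n)\in\FF_p^2$ with $m^2+mn+n^2\equiv u$ (germ $(0,1,1)$) or $m^2+n^2+mn-m-n\equiv u$ (germ $(0,0,0)$), the primes $p=2,3$ are checked by inspection, and for $p\ge 5$ one diagonalizes to $A^2+3B^2\equiv c$ with the exceptional class at $u\equiv -3^{-1}\pmod p$. The only (harmless) divergence is that you evaluate the conic count by the complete character sum $\sum_{B}\bigl(\tfrac{c-3B^{2}}{p}\bigr)$, whereas the paper parametrizes the conic by lines through the base point $(1,0)$ and recovers the non-residue count from a partition of $\FF_p^2$; both are classical and yield the same values $p\mp 1$, $2p-1$ and $1$, and the paper's stated normalization $\dens_p(l)=N_p(l)/p^2$ confirms that the node-count interpretation you adopt is the intended one, so no Landau-type input is needed.
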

\begin{figure}[hb]
 \centering
 \includegraphics[width=0.49\textwidth]{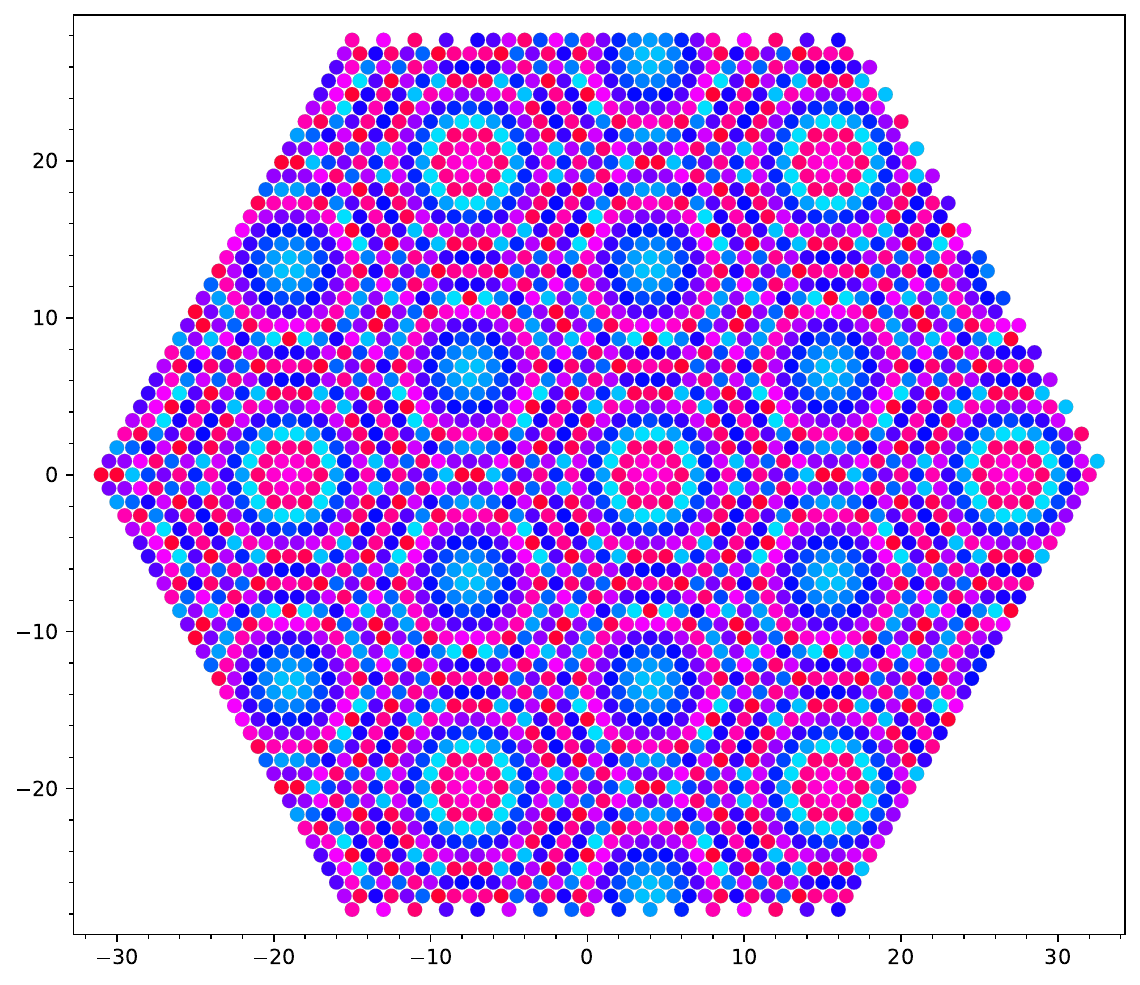}
  \includegraphics[width=0.49\textwidth]{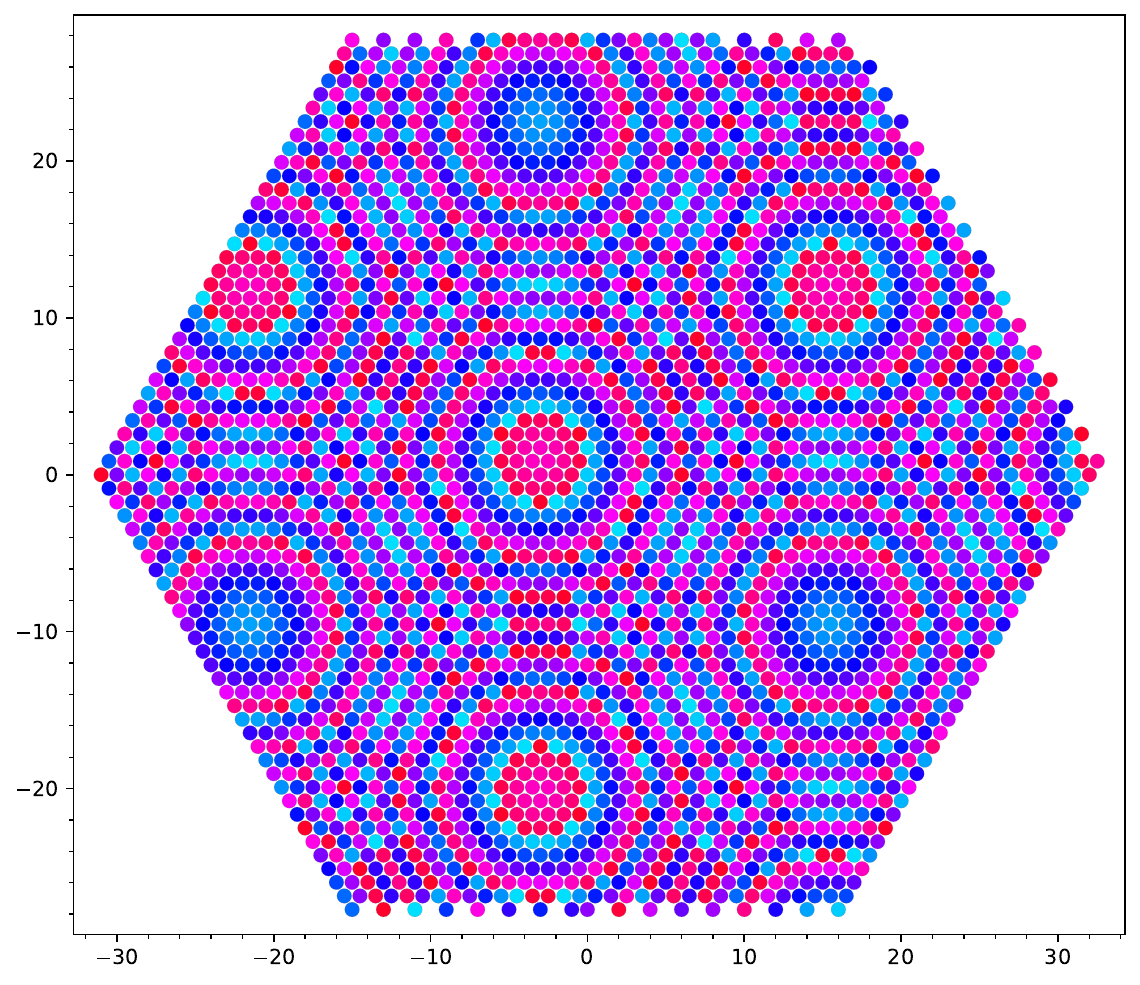}
\caption{A cut-off representation of the triangular networks generated by $(9,2,6)$ (left)
and $(1,8,3)$ (right). The weights are taken modulo $23$ on the left and modulo $37$ on the right.
In order to distinguish them, the residue classes are represented in distinct colors in each of the two cases.
}
 \label{FigGermsMod2331Ales}
 \end{figure}
Note that Theorem~\ref{TheoremC} allows us to distinguish or even precisely determine which tower a given 
triple $(a,b,c)$ belongs to.
For example, knowing additionally that $(a,b,c)$
appears in one of the basic tessellations $\cT_H(0,0,0)$,
$\cT_H(0,1,1)$, $\cT_H(1,0,1)$, $\cT_H(1,0,1)$, and one of
$a,b$ or $c$ is congruent to $2$ mod~$3$, then 
it would follow that $(a,b,c)\in\cT(0,0,0)$.
With less information, one needs to combine Theorem~\ref{TheoremC}
with finding a path to the center
(the triangle with the minimum weights) of the tessellation
and then travel by translation through the tower to find the basic germ (see the discussion in Section~\ref{subsectionEvaluationNegativeWeights}).

\medskip

The work is structured as follows.
In Section~\ref{SectionGerms} we present the first implications 
of employing the iterative composition of any combination of the operators
in $\{H',H'',H'''\}$, and then prove the existence of the 
four towers of tesselations corresponding to the fundamental germs.
Next, in Section~\ref{SectionLozenge}, we describe the geometric representation of the image of the composed operators $H$ 
as lozenge tilings of the plane 
with the represented integers placed as weights in the nodes.
Continuing on, in Section~\ref{SectionDensities}, we prove
Theorem~\ref{TheoremC}, which provides explicitely the densities of the
weights in residue classes modulo any prime number~$p$.
Our work concludes in Section~\ref{SectionRuleD} by achieving the parameterization of
all weights in a tessellation, which consequently supports the fact stated in 
Theorem~\ref{TheoremA} that only a finite number of weights can be found under any given threshold.
Additionally, we also present the procedure to be followed on the shortest route taken on the network nodes
from a specific triangle towards its origin.

\section{Properties of \texorpdfstring{$H$}{H}}\label{SectionGerms}

\begin{lemma}\label{LemmaTR}
    For any integer $a,b,c,h$ we have:
   \begin{equation}\label{eqTripleTranslations}
  \begin{split}
  \cT_H(a+h,b+h,c+h) &= \cT(a,b,c)+h\,,\\
  \cR_H(a+h,b+h,c+h) &= \cR(a,b,c)+h\,.
  \end{split}
\end{equation} 
\end{lemma}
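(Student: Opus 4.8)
The plan is to show that each of the three building-block maps commutes with the diagonal translation $\delta_h\colon(x,y,z)\mapsto(x+h,y+h,z+h)$, and then to propagate this through compositions and through the unions defining $\cT_H$ and $\cR_H$.

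First I would verify the base identity by direct substitution: for each of $H',H'',H'''$ one checks $H^{\bullet}\circ\delta_h=\delta_h\circ H^{\bullet}$. For instance, $H'(x+h,y+h,z+h)=(-(x+h)+1+(y+h)+(z+h),\,y+h,\,z+h)=(-x+1+y+z,\,y,\,z)+(h,h,h)=\delta_h\big(H'(x,y,z)\big)$, and the computations for $H''$ and $H'''$ are identical up to a cyclic relabeling of the coordinates. This is exactly where the structure of~\eqref{eqHHH} matters: each map adds two coordinates and subtracts the third (plus the constant $1$), so the three extra copies of $h$ collapse to a single $+h$ in every slot, and the additive constant is untouched.

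Next I would lift this to the iterates $H^{[n]}$. Recalling that $H^{[n]}$ is the \emph{set} of all $3^n$ compositions obtained by choosing one of $H',H'',H'''$ at each of the $n$ steps, I would argue by induction on $n$ that for each \emph{fixed} such composition $G$ one has $G\circ\delta_h=\delta_h\circ G$; the inductive step is just $G'=H^{\bullet}\circ G$, whence $G'\circ\delta_h=H^{\bullet}\circ\delta_h\circ G=\delta_h\circ H^{\bullet}\circ G=\delta_h\circ G'$. Taking the union over the $3^n$ admissible $G$'s then gives $H^{[n]}(a+h,b+h,c+h)=H^{[n]}(a,b,c)+(h,h,h)$ as sets of triples, for every $n\ge0$.

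Finally I would take the union over $n\ge0$ in definition~\eqref{eqT}; since translation by $(h,h,h)$ is a bijection of $\ZZ^3$, it commutes with arbitrary unions, so $\cT_H(a+h,b+h,c+h)=\cT_H(a,b,c)+(h,h,h)$, which is the first line of~\eqref{eqTripleTranslations}. For the second line I would use the definition of $\cR_H$ as the set of coordinates of triples in $\cT_H$: a value $m$ occurs as a component of some triple of $\cT_H(a,b,c)$ if and only if $m+h$ occurs as the corresponding component of the translated triple in $\cT_H(a+h,b+h,c+h)$, so $\cR_H(a+h,b+h,c+h)=\cR_H(a,b,c)+h$. I do not expect a genuine obstacle here; the only point requiring care is that $H^{[n]}$ is a family of maps rather than a single map, so the commutation with $\delta_h$ must be established branch by branch before the unions over branches and over $n$ are taken.
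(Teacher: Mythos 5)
Your proposal is correct and follows essentially the same route as the paper: both rest on the single computation that each of $H',H'',H'''$ commutes with the diagonal translation $(x,y,z)\mapsto(x+h,y+h,z+h)$, after which the claim propagates through compositions and the unions defining $\cT_H$ and $\cR_H$. You merely make explicit the branch-by-branch induction and the passage to unions that the paper leaves implicit.
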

\begin{proof}
We use the usual notations for the translations of a tuple and of a set,
by which $(a,b,c)+h:=(a+h,b+h,c+h)$, and $\cS+h$ represents the set obtained 
by adding $h$ to each element in the set $\cS$. 

The equality of the sets of represented integers in~\eqref{eqTripleTranslations}
follows from the equality of the sets of triples above, which in turn is implied 
by the definitions of the three analogous relations for $H',H''$ and $H'''$, of which, 
for exemplification, the first one is
       \begin{equation*}
  \begin{split}
  H'(a+h,b+h,c+h) &= \big(-(a+h)+1 + (b+h)+(c+h),b+h,c+h\big)\\
  &= (-a+1+b+c,b,c)+h \\
  &= H'(a,b,c)+h\,.
  \end{split}
\end{equation*} 
This completes the proof of the lemma.
\end{proof}

\begin{proposition}\label{PropositionInvolutionNonComute}
We have:
\begin{enumerate}
    \item[(1)] The operators $H',H'',H'''$ are involutions.
    \item[(2)] Any two distinct operators $H',H'',H'''$ do not commute.
    \item[(3)] $H''\circ H'''\circ H'' = H'''\circ H''\circ H'''$.
    \item[(4)] $\big(H'\circ H''\big)^{[3]} = Id$.
\end{enumerate}
\end{proposition}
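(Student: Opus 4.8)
The plan is to verify all four statements by direct substitution, exploiting throughout that each of $H',H'',H'''$ updates exactly one coordinate and leaves the other two fixed, so that every step of a composition is a one-line computation; I use the standard convention that $G_1\circ G_2$ means ``apply $G_2$ first''. \textbf{Part (1).} Since $H'(x,y,z)=(-x+1+y+z,y,z)$ fixes $y$ and $z$, applying it once more returns the first coordinate to $-(-x+1+y+z)+1+y+z=x$; hence $H'\circ H'=Id$, and the identical computation with the coordinates cyclically relabelled handles $H''$ and $H'''$.

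\textbf{Part (2).} By the cyclic symmetry among the three operators (each $H^{(i)}$ modifies coordinate $i$, replacing it by $1$ plus the sum of the other two), it is enough to treat the pair $H',H''$. One computes $H'\circ H''(x,y,z)=(2-y+2z,\,x-y+z+1,\,z)$ while $H''\circ H'(x,y,z)=(-x+1+y+z,\,-x+2+2z,\,z)$; the first coordinates are different polynomials in $x,y,z$ (they already disagree at $(0,0,0)$, where they equal $2$ and $1$), so the two operators do not commute, and the same holds for each of the remaining pairs by symmetry.

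\textbf{Parts (3) and (4).} Each identity is proved by evaluating both sides on a generic triple $(x,y,z)$. For (3), a short computation shows that $H''\circ H'''\circ H''$ and $H'''\circ H''\circ H'''$ both send $(x,y,z)$ to $(x,\,2x-z+2,\,2x-y+2)$ — note that the first coordinate is never altered, since neither $H''$ nor $H'''$ touches it — which gives the braid-type relation. For (4), put $G=H'\circ H''$; since $G$ fixes the third coordinate, only the first two need to be tracked, and one finds in succession $G(x,y,z)=(2-y+2z,\,x-y+z+1,\,z)$, then $G^{[2]}(x,y,z)=(1-x+y+z,\,2-x+2z,\,z)$, and finally $G^{[3]}(x,y,z)=(x,y,z)$, i.e.\ $(H'\circ H'')^{[3]}=Id$.

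The computations carry no conceptual difficulty; the only care needed is in bookkeeping the affine terms (the repeated $+1$'s) and the order of composition, and the steps stay short precisely because each operator is a single-coordinate update. It is worth remarking that, in view of (1), the identities in (3) and (4) say that the pairwise products of $H',H'',H'''$ all have order $3$ — for involutions $a,b$ the braid relation $aba=bab$ is equivalent to $(ab)^{3}=Id$ — so, combined with the cyclic symmetry, the group $\langle H',H'',H'''\rangle$ is a quotient of the affine symmetric group $\widetilde{S}_{3}$; this is consistent with the tessellations $\cT_H(a,b,c)$ being infinite, since a genuine $S_3$-action could only produce finitely many triples.
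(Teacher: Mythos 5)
Your verifications are all correct: I checked each displayed image, including that $H''\circ H'''\circ H''$ and $H'''\circ H''\circ H'''$ both send $(x,y,z)$ to $(x,\,2x-z+2,\,2x-y+2)$, and that $(H'\circ H'')^{[2]}(x,y,z)=(1-x+y+z,\,2-x+2z,\,z)$ with a third application restoring $(x,y,z)$. For parts (1), (2) and (4) your route coincides with the paper's (direct substitution plus the cyclic relabelling of coordinates). The one place you diverge is part (3): the paper does not recompute it but deduces it algebraically from (4) together with (1), by composing $\big(H''\circ H'''\big)^{[3]}=Id$ on the right with $H'''$, $H''$, $H'''$ and cancelling involutions — exactly the equivalence $aba=bab\Leftrightarrow(ab)^3=Id$ for involutions that you state in your closing remark, only used as the proof rather than as a comment. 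Your direct computation is equally valid and self-contained; the paper's derivation is shorter once (4) is in hand. Your identification of $\langle H',H'',H'''\rangle$ as a quotient of the affine Weyl group $\widetilde{S}_3$ is correct (the stated relations are precisely its Coxeter presentation) and fits the triangular-tiling geometry of Section 3, though the paper never makes this explicit.
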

\begin{proof}
It is enough to check that $H'\circ H'=Id$ and 
$H'\circ H''\neq H'\circ H''$, as for the other operators
the analoguos relations follow by a rotation of the variables.
Also, let us note that the other permutations of the components 
as in part (3) or part (4) can be obtained in the same way through the corresponding composition 
of the analogous permuted operators.

Let $x,y,z$ be fixed. (1) For the fist part, we have:
\begin{equation*}
  \begin{split}
  H'\big(H'(x,y,z)\big) &=  H'(-x+1+y+z,y,z) \\
  &= \big(-(-x+1+y+z)+1+y+z, y, z\big)\\
  &= (x,y,z).
  \end{split}
\end{equation*}

\noindent
(2) The necommutativity follows since
\begin{equation*}
  \begin{split}
  H'\big(H''(x,y,z)\big) &= H'(x,-y+1+z+x,z) \\
  &= \big(-x+1+(-y+1+z+x)+z, -y+1+z+x,z\big)\\
  &= (2-y+2z, 1-y+z+x,z),
  \end{split}
\end{equation*}
and
\begin{equation*}
  \begin{split}
  H''\big(H'(x,y,z)\big) &= H''(-x+1+y+z,y,z) \\
  &= \big(-x+1+y+z, -y+1+z+(-x+1+y+z),z\big)\\
  &= (1-x+y+z, 2-x+2z,  z),
  \end{split}
\end{equation*}
so that $\big(H'\circ H''\big)(x,y,z) = \big(H''\circ H'\big)(x,y,z)$ if and only if
$y=x$ and $z=x-1$.

\medskip\noindent
(4) As before, we have
\begin{equation*}\label{eqH23}
  \begin{split}
  H''\big(H'''(x,y,z)\big) &= (x, 2+2x-z,  1+x+y-z),
  \end{split}
\end{equation*}
which, applied twice, implies
\begin{equation*}\label{eqH23twice}
  \begin{split}
  \big(H''\circ H'''\big)^{[2]}(x,y,z) &= 
  \big(x, 2+2x-(1+x+y-z), \\
  &\phantom{(x,xX}1+x+(2+2x-z)-(1+x+y-z) \big)\\
  &= (x,1+x-y+z, 2+2x-y),
  \end{split}
\end{equation*}
and finally, for the third time,
\begin{equation*}\label{eqH23thrice}
  \begin{split}
  \big(H''\circ H'''\big)^{[3]}(x,y,z) &= 
  \big(x, 2+2x-(2+2x-y), \\
  &\phantom{(x,xX} 1+x+(1+x-y+z) -(2+2x-y)\big)\\
  &= (x,y, z).
  \end{split}
\end{equation*}
The same calculation with components and operators interchanged
proves formula (4), and also the fact that
$\big(H'\circ H''\big)^{[3]} = \big(H'\circ H'''\big)^{[3]} = Id$.

\medskip\noindent
(3)
By part (4) we know that $H''\circ H'''\circ H'' \circ H'''\circ H''\circ H''' = Id$, 
from which, composing on the right with $H''', H''$ and $H'''$, 
in that order, and using (1),
we obtain $H''\circ H'''\circ H'' = H'''\circ H''\circ H'''$.
These completes the proof of the proposition.
\end{proof}

\begin{theorem}\label{TheoremGerms}
Let $a,b,c\in\ZZ$. Then, there exists $h\in\ZZ$ such that either 
\begin{equation*}
   \begin{split}
    \cR_H(a,b,c) &= \cR_H(0,0,0) +h\quad\text{ or }\\
    \cR_H(a,b,c) &= \cR_H(0,1,1)+h = \cR_H(1,0,1)+h = \cR_H(1,1,0)+h.       
   \end{split}
\end{equation*}
\end{theorem}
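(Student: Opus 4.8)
The plan is to reduce every triple to one of the four germs by a combination of translations (Lemma~\ref{LemmaTR}) and parity considerations, and then to show that the three germs $(0,1,1)$, $(1,0,1)$, $(1,1,0)$ lie in a common tessellation while $(0,0,0)$ does not. First I would observe, by inspecting the formulas~\eqref{eqHHH}, that the quantity $a+b+c \pmod 2$ is an invariant of each of $H'$, $H''$, $H'''$: each operator replaces one coordinate $t$ by $1+(\text{sum of the other two})-t$, which changes $a+b+c$ by $1-2t+2\cdot(\text{other sum adjustments})$, i.e. by an even amount, so the total sum keeps its parity under every composition. Hence the parity of $a+b+c$ is constant on each $\cT_H(a,b,c)$. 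For a germ $(0,0,0)$ the sum is even; for $(0,1,1)$, $(1,0,1)$, $(1,1,0)$ the sum is even as well — so parity alone does not separate the two families, and I would instead track a finer invariant, the residue of $a+b+c \pmod 3$ together with the multiset of residues $\{a,b,c\}\pmod 2$, or more efficiently just use Theorem~\ref{TheoremC} as a black box to confirm the two families are genuinely distinct.

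The concrete argument I would give runs as follows. Given $(a,b,c)\in\ZZ^3$, set $h$ equal to one of $a,b,c$ — say $h=c$ — and use~\eqref{eqTripleTranslations} to write $\cR_H(a,b,c)=\cR_H(a-c,b-c,0)+c$, reducing to the case where one coordinate is $0$; write the reduced triple as $(p,q,0)$. Now I would run a short case analysis on the parities of $p$ and $q$. If $p\equiv q\equiv 0\pmod 2$, I want to show $\cT_H(p,q,0)$ contains a triple of the form $(r,r,r)$, which after a further translation by $-r$ lands on $(0,0,0)$; the idea is that when all three coordinates are even one can apply a suitable sequence of $H',H'',H''',$ driving the coordinates together — concretely, using the two-step and three-step formulas computed in the proof of Proposition~\ref{PropositionInvolutionNonComute} one can decrease $|p-q|$, $|q|$, $|p|$ while staying in the even locus, and a descent argument terminates at an equal triple. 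If instead exactly one of $p,q$ is odd, or both are odd, the triple has the parity pattern of one of $(0,1,1)$, $(1,0,1)$, $(1,1,0)$ up to translation, and the same descent (now preserving that parity pattern) brings it to one of those three germs. Finally, to see that $(0,1,1)$, $(1,0,1)$, $(1,1,0)$ all generate the \emph{same} $\cR_H$: $(0,1,1)$ and $(1,0,1)$ and $(1,1,0)$ are permutations of one another, and because the three operators $H',H'',H'''$ are themselves the three coordinate-rotations of a single rule, any permutation of a germ lies in the orbit of that germ — explicitly, from $(0,1,1)$ one computes $H'(0,1,1)=(-0+1+1+1,1,1)=(3,1,1)$, and chasing a few more steps (or invoking the symmetry directly) one reaches $(1,0,1)$ and $(1,1,0)$ inside $\cT_H(0,1,1)$, forcing the three represented-sets to coincide.

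The main obstacle is the descent step: proving that from an arbitrary even triple $(p,q,0)$ one can actually reach an equal triple $(r,r,r)$ (and likewise for the mixed-parity class), rather than merely staying in the right parity coset. The operators increase coordinates on average, so one cannot naively minimize $|p|+|q|+|0|$; instead I would minimize a shifted quantity such as $\max-\min$ of the three coordinates, show that whenever the three coordinates are not all equal some operator strictly decreases it, and handle the boundary cases (two coordinates equal, the fixed-point phenomenon noted after~\eqref{eqT} where $H'''(a,b,c)=(a,b,c)$ when $c=(a+b+1)/2$) separately. An alternative that sidesteps the descent entirely is to quote the forthcoming parameterization of $\cT_H(a,b,c)$ from Section~\ref{SectionRuleD}, which classifies every tessellation by an explicit family of quadratic forms; matching the quadratic-form invariant of $(a,b,c)$ against those of the four germs immediately yields the claimed $h$. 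I would present the self-contained descent as the main line and remark that the parameterization gives a second proof.
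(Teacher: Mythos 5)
Your plan has a genuine gap at its core: the case division by the parities of the reduced triple $(p,q,0)$ does not match the actual classification into towers, so the proposed descent cannot deliver the stated conclusion. Concretely, $(0,0,2)$ has all coordinates even, yet $H'''(0,0,2)=(0,0,-1)=(1,1,0)+(-1)$, and $\cT_H(0,0,2)$ contains no triple of the form $(r,r,r)$: the class of $x+y+z \bmod 3$ in the partition $\{0,1\}$ versus $\{2\}$ is preserved by each of $H',H'',H'''$ (each sends the sum $s$ to $2s+1 \bmod 3$) and by translations (which shift $s$ by $3h$), and $(0,0,2)$ has sum $2$ while $(r,r,r)$ has sum $\equiv 0$. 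Conversely $(0,0,1)$, with one odd coordinate, satisfies $H'''(0,0,1)=(0,0,0)$ and lies in the $(0,0,0)$ tower. Relatedly, your opening claim that $a+b+c\pmod 2$ is invariant is false: $H'$ changes the sum by $1+y+z-2x$, which is odd whenever $y+z$ is even (e.g. $(0,0,0)\mapsto(1,0,0)$). A further error: the ordered triples $(1,0,1)$ and $(1,1,0)$ are \emph{not} reachable from $(0,1,1)$ --- the paper's four towers are distinct precisely because permutations of a germ generally lie in different tilings. The equality $\cR_H(0,1,1)=\cR_H(1,0,1)=\cR_H(1,1,0)$ follows instead from the coordinate-permutation symmetry you mention only parenthetically: conjugating the three operators by a permutation of coordinates permutes the tilings but leaves the set of represented integers unchanged.

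The descent idea is salvageable and is close in spirit to what the paper actually does, but the paper organizes it around a globally defined quantity rather than a local one: by Theorem~\ref{TheoremA} the set $\cR_H(a,b,c)$ has a minimum $m$; picking a triple $(m,u,v)$ that represents $m$ and using that every neighbour's new entry (e.g.\ $-v+1+m+u$) must still be $\ge m$ forces $v\in\{u,u+1\}$ and then $u\in\{m,m+1,m+2\}$, and a few explicit moves show that either $(m,m,m)$ or $(m,m+1,m+1)$ (in some position) lies in $\cT_H(a,b,c)$; translating by $-m$ gives the germ. This sidesteps both the parity bookkeeping and the defect in your minimization of $\max-\min$, for which a single operator does not always strictly decrease the spread (e.g. $H'''(x,y,y)=(x,y,x+1)$ keeps the spread equal to $y-x$ when $y\ge x+1$), so a two-step analysis and a separate termination argument would be needed. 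As written, the proof does not go through.
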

\begin{proof}
    From Theorem~\ref{TheoremA} we know that the set $\cR_H(a,b,c)$ has a first element.
Then, let 
$m = \min \cR_H(a,b,c)$,
and let $(m,u,v)\in \cT_H(a,b,c)$ be a triple in which $m$ is represented. 
We do not know in advance on which position $m$ appears in the triple, 
but the other two situations in which $m$ would be in the second or 
the third position are treated the same.

Then there are two possibilities: $m\le v\le u$ or $m\le u\le v$.
Again, it is enough to discuss only one of the cases, the other being treated similarly by rotating the variables and the corresponding operators involved. 
Moreover, we will see that the cases blend together, because we will find that in fact $u=v$. 

Thus, without restricting the generality, we may also suppose that  $m\le u\le v$.
Since  $(m,u,v)\in \cT_H(a,b,c)$, it follows that 
the third component of $ H'''(m,u,v)$, which equals
$ -v +1+m+u$ belongs to $\cR_H(a,b,c)$. 
Then, since $m$ is the minimum, \mbox{$m\le -v +1+m+u$}, 
which implies $v\le u+1$. Therefore, either $v=u$ or $v=u+1$.

\medskip
\noindent
\texttt{Case $v=u+1$.}
Since $(m,u,u+1)\in\cT(a,b,c)$ it follows that 
$H'''(m,u,u+1)$ $=(m,u,u)$ is also in $\cT(a,b,c)$, so that it is 
enough to consider the case $u=v$.

\medskip
\noindent
\texttt{Case $v=u$.}
Starting from $(m,u,u)$, the following neighbor triples are also in $\cT_H(a,b,c)$:
\begin{equation*}
    (m,u,u)  \xrightarrow{\text{ $H'''$ }} (m,u,m+1)
     \xrightarrow{\text{ $H''$ }}  (m,-u+2m+2,m+1).
\end{equation*}
The condition that $m$ is the minimum of $\cR_H(a,b,c)$ implies
$m\le -u+2m+2$, that is $u\le m+2$.
As a consequence, for $u$ are only three possible values, $m,m+1$ or $m+2$, which we will analyze next.

\begin{enumerate}
\setlength\itemsep{5pt}
    \item If $u=m$, then we have $(m,m,m)\in\cR_H(a,b,c)$.
    \item If $u=m+1$, since $v=u$, we ha that $(m,m+1,m+1)\in\cR_H(a,b,c)$.
    \item If $u=m+2$ then $(m,u,u)=(m,m+2,m+2)$ and
the following three-steps path also arrives at $(m,m,m)$:
\begin{equation*}
    (m,m+2,m+2)  \xrightarrow{\text{ $H'''$ }} (m,m+2,m+1)
     \xrightarrow{\text{ $H''$ }}  (m,m,m+1)
     \xrightarrow{\text{ $H'''$ }} (m,m,m).
\end{equation*}    
\end{enumerate}
\noindent
In conclusion, we have shown that, 
provided
$m=\min \cR(a,b,c)$, then  either
\mbox{$(m,m,m)\in\cT_H(a,b,c)$}
or $(m,m+1,m+1)\in\cT_H(a,b,c)$.
By  Lemma~\ref{LemmaTR} it then follows that 
$(0,0,0)\in\cT_H(a-m,b-m,c-m)$ in the first case, and
$(0,1,1)\in\cT_H(a-m,b-m,c-m)$ in the second.
\end{proof}

The two fundamental sequences of the represented integers, 
the only ones that exist according to Theorem~\ref{TheoremGerms},
arranged in increasing order, are:
\begin{equation}\label{eqR12}
    \begin{split}
        \cR(0,0,0) & = \{ 0, 1, 2, 4, 6, 8, 9, 12, 14, 16, 20, 21, 22, 25, 30, 32, 36,
        \dots \},   \\
        \cR(0,1,1) & = \{ 0, 1, 3, 4, 7, 9, 12, 13, 16, 19, 21, 25, 27, 28, 31, 36, 37,
        \dots \}.
    \end{split}
\end{equation}
The numbers can be obtained starting with the germs $(0,0,0)$ and $(0,1,1)$
and then step by step using the definition, keeping or changing the operator from
$\{H',H'',H'''\}$, that is, moving around in all directions.
Another more efficient approach is utilizing the parametrization of the elements of the sequences 
presented in Section~\ref{SectionRuleD}.

The second of the two sequences in~\eqref{eqR12} are the 
\textit{{Löschian} numbers}~\cite[{\href{https://oeis.org/A003136}{A003136}}]{oeis}.
Named after August {Lösch}, the sequence 
is a bi-product of a study in the field of economics, 
regarding market development, population distribution, and the size of regions 
approximated using a honeycomb network~\cite[Chapter 10]{Loc1940}.
The ordered sequence $\cR(0,1,1)$ is abundant in properties 
(see~\cite{KRNG2024} and the references therein)
and besides algebra and number theory~\cite{CD2014,Gol1935,Gol1937,CRS1999}
it arises in very diverse contexts, such as 
a counter of the protein coats in a virus shell model~\cite[Chapter 3]{Pet1998},
at the confluences between art and mathematics~\cite{Rus2017},
a fractal generator in the theory of place geometry~\cite{AA1989, BGNR2023},
or in the geographical Central Place theory~\cite{Bat2013, Mar1977, BGNR2023}.

\section{The lozenge representation of the operators \texorpdfstring{$H$}{H}}\label{SectionLozenge}
Given three integers $a, b, c$, we place them at the vertices of an equilateral triangle, called 
the \textit{base triangle}. Then, by fixing any two of the numbers $a, b, c$, we place in the fourth vertex of 
the \textit{lozenge}\footnote{A \textit{lozenge} is the union of two triangles, which
are identical to the base triangle and share a common side.} generated by the base
triangle (the endpoints of its smaller diagonal being the fixed numbers) the value 
of the corresponding operator from $\{H', H'', H'''\}$ 
(see Figure~\ref{FigRuleD} and the further in-depth description in Section~\ref{SectionRuleD}).

By applying this procedure to every pair of two numbers from $\{a, b, c\}$, we 
obtain three lozenges with a common base triangle, a star with three equilateral triangles 
built adjacent to the sides of the base triangle.
Continuing in the same way, we proceed with the newly obtained equilateral triangles,
and then repeat the process endlessly, resulting in a triangular tiling of the plane
with integers.

\begin{remark}\label{RemarkEssence}
In essence, the geometric interpretation of relations~\eqref{eqHHH}
is a $2$-dimensional tiling with integers in nodes,
which has the property that in any lozenge of it, 
the sum of the numbers on the long diagonal is always $1$ 
more than the sum of the numbers on the short diagonal.
\end{remark}
It is fundamental, and in Section~\ref{SectionRuleD} we will ascertain, that this construction is indeed consistent, 
meaning that any triangle, which is obtained in exactly three ways as the intersection of three lozenges is uniquely and unequivocally determined. 
One can check the outlined procedure starting with any triple of numbers from the adjacent circles in 
Figures~\ref{FigureTwoGerms} and~\ref{Fig2Examples}.

With this geometric representation it becomes clear that for any two triangles or triples of 
represented integers,
there are infinitely many paths and corresponding sequencers of operators taken from $\{H',H'',H'''\}$
that composed successively into a new operator $H$ connects one triple to the other.
In order to mark the association, on the set of all triples~$\ZZ^3$, 
let us define a relation according to which two triples are \textit{equivalent} 
if they belong to the same tiling.
This is the same as saying that the triples in any set $\cT_H(a,b,c)$ defined 
by~\eqref{eqT} are pairwise equivalent.
The equivalence classes of this relation are these parallel triangular tilings,
each of them being generated by any base triangle it contains.

It should be noted that formally, any triangle of adjacent numbers obtained by iterating the operators 
from $\{H',H'',H'''\}$ is associated with a specific order. 
The classification of triples in the equivalence classes $\cT_H$ has its unique features, 
which causes nearby triples in~$\ZZ^3$ to not always be equivalent. 
As a result, in general, the permutations of the triple $(a,b,c)$ do not necessarily belong to the 
same tiling $\cT_H(a,b,c)$.
\begin{figure}[ht]
 \centering
 \includegraphics[width=0.43\textwidth]{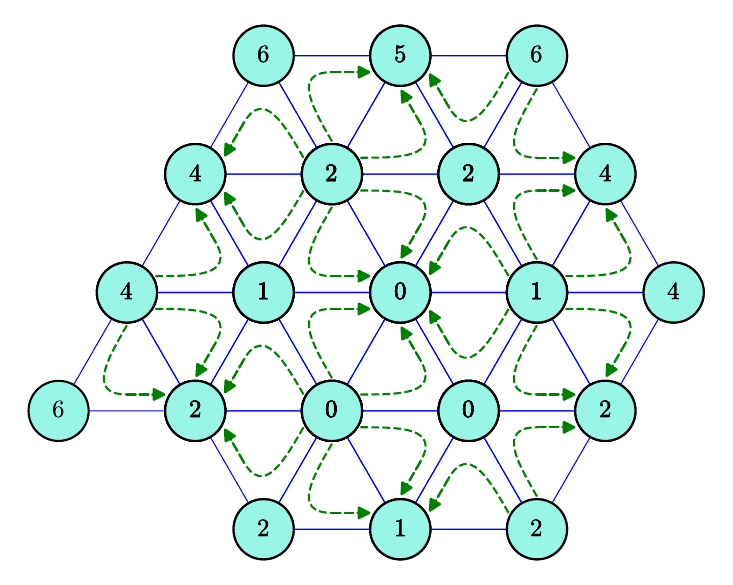}
 \qquad
  \includegraphics[width=0.43\textwidth]{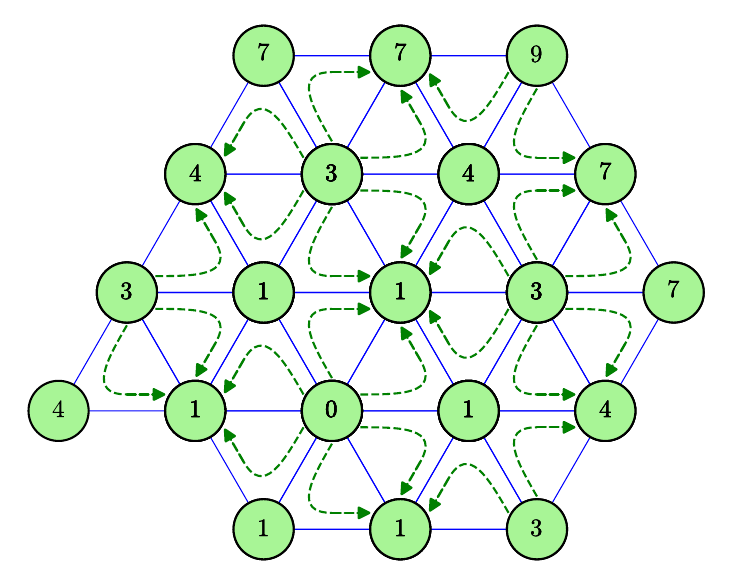}
\caption{The triples and how they appear oriented in the triangular networks generated by 
$(0,0,0)$ (left) and $(0,1,1)$ (right). Once can check that every triple 
that occurs appears only once in the left image and exactly six times in the right image.
}
 \label{FigureSpirals}
 \end{figure}

 \begin{remark}\label{RemarkCenters}
There is a difference between the rotation symmetries of the two triangular 
networks in Figure~\ref{FigureSpirals}.
In the network from the left-side, a spiral of lozenges is formed around a center consisting 
of the triangle with vertices $(0,0,0)$, while in the network on the right-side, 
although the generation also starts around a triangle, namely $(0,1,1)$, 
the network ends up having a single symmetry center, 
the node with weight $0$, instead of a triangle as in the left figure. 
\end{remark}

Additionally, we define an equivalence relation on the set of triples $\cT_H$ from~\eqref{eqT},
according to which $\cT_H(a_1,b_1,c_1)$ is equivalent to $\cT_H(a_2,b_2,c_2)$ if there exists an integer $h$
such that $\cT_H(a_1,b_1,c_1) = \cT_H(a_2,b_2,c_2) + h$.
In this way, $\ZZ^3$ is partitioned into equivalence classes formed by \textit{towers of tilings}, 
in which the elements are obtained from each other by translations, according to Lemma~\ref{LemmaTR}.
Furthermore, Theorem~\ref{TheoremGerms} states that every triple $(a,b,c)$ can be found in a unique tiling, 
and there are exactly four distinct towers. One of these, to which $\cT_H(0,0,0)$ belongs, is distinguished, 
while the other three have one of the three permutations of $(0,1,1)$ as the generating germ for their classes
of equivalence.

\begin{remark}\label{Remark16}
    The consequence of the difference between the two types of symmetry centers, 
as pointed out in Remark~\ref{RemarkCenters},
is the fundamental distinction between the two types of networks in the four towers.

Thus, any ordered triple $(a,b,c)$ appears exactly once in each triangular tiling $\cT_H(a,b,c)$
that is equivalent to $\cT_H(0,0,0)$
and exactly six times if it belongs to a tiling belonging to any of the other three towers generated by
$(0,1,1)$, $(1,0,1)$ or $(1,1,0)$.  
\end{remark}

\section{The densities \texorpdfstring{$\pmod p$}{mod p} -- Proof of Theorem~\ref{TheoremC}}\label{SectionDensities}
In the following, we use the parametrization of the represented integers from 
Section~\ref{SectionRuleD}, Theorem~\ref{LemmaDL}.
\subsection{The germ \texorpdfstring{$(0,1,1)$}{(0,1,1)}}
Let $F(x,y)=x^2+xy+y^2$. We find the distribution of the 
residue classes of $F(x,y)$ modulo $p$ for $(x,y)$
in the modulo $p$ box  $[0,p-1]^2$.
For this, we need to calculate the number of solutions
$N_p(l)=\#\cM_p(l)$, where
\begin{equation}\label{eqNpl11}
	\cM_p(l) = \big\{(x,y) : x^2+xy+y^2 \equiv l \pmod p\big\},
\end{equation}
for any $l$, $0\le l \le p-1$. Then the limit density is $\dens_p(l) = N_p(l)/p^2$.
\smallskip

\begin{figure}[ht]
 \centering
 \includegraphics[angle=-90,width=0.99\textwidth]{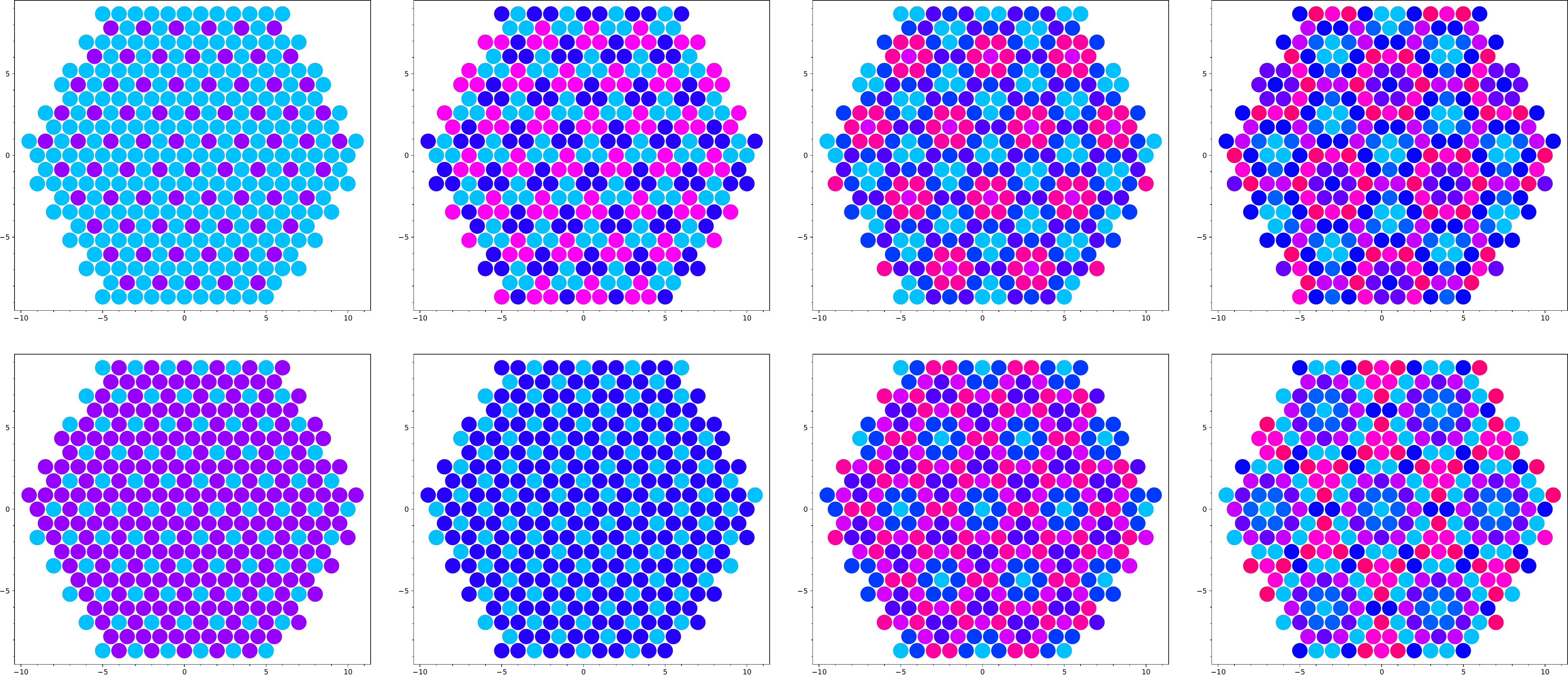}
\caption{Comparison between local symmetries of the two types of triangular networks.
On the top row is the network generated by $(0,0,0)$, and on the bottom row 
is the one generated by $(0,1,1)$.
In each of these, the weights are taken modulo $2, 3, 5$, and~$7$, and 
the residue classes are represented in distinct colors, 
so that the cases from Theorem~\ref{TheoremC} can be verified.
}
 \label{FigGermsMod2357}
 \end{figure}

\noindent
\texttt{Case $p=2$}.
Modulo $2$, the values of $F(x,y)$ for $x,y\in\{0,1\}$ are 
$ \begin{pmatrix} 0 & 1 \\ 1 & 1 \end{pmatrix} $,
so that 
$\dens_2(0) = 1/4$ and $\dens_2(1) = 3/4$.  

\noindent
\texttt{Case $p=3$}.
The values of $F(x,y) \pmod 3$, for $x,y=0,1,2$, are: 
$ \begin{pmatrix}
0 & 1 & 1 \\
1 & 0 & 1 \\
1 & 1 & 0 \\
\end{pmatrix} $.
Therefore, the value of the densities are:
$\dens_3(0)=3/3^2=1/3$, $\dens_3(1)=6/3^2=2/3$ and
$\dens_3(2)=0$.

\noindent
\texttt{Case $p\ge 5$}.
\rule{0pt}{1.5em}
The handling of the congruence in~\eqref{eqNpl11} differs 
depending on whether $l$ is zero or not modulo $p$.
First, let us make a variable substitution to remove the term $xy$ from the left-hand side.
To do this, we multiply the congruence by $4$ and complete a square. Then, we replace the variables $x, y$ with $u, v$,
where $u = x+2^{-1}y$ and $v=2^{-1}y$.
With the simplified congruence, the problem reduces to
counting $N^*_p(l)=\#\cM_p^*(l)$, where
\begin{equation}\label{eqNstar011}
    \cM^*_p(l) = \big\{(u,v) : u^2+3v^2 \equiv l \pmod p\big\},
\end{equation}
since there is a one-to-one correspondence between the solutions of the two congruences, implying 
$N^*_p(l)=N_p(l)$.

\medskip
\noindent
\textbf{A.}
Suppose $l\equiv 0\pmod p$. Then the nature of the congruence 
in~\eqref{eqNstar011} depends on the fact that $-3$
is a quadratic residue or not. Firstly, if 
$\legendre{-3}=-1$\footnote{The Legendre symbol denoted by  $\legendre[p]{a}$ evaluates to $1$ if $a$ is a quadratic residue and to $-1$ if $a$ is a non-residue modulo $p$, for any $a$ relatively prime to $p$.}, the congruence has only a single solution, $(u,v)=(0,0)$.

Secondly, if $\legendre{-3}=1$, let $a$ be a square root
of $-3$. Then, for each $v\in\{1,\dots,p-1\}$, the congruence 
in~\eqref{eqNstar011} has exactly two distinct solutions
$(av,v)$ and $(p-av,v)$ because $2av\not\equiv 0\pmod p$.
Then, we have a total of $2p-1$ solutions in this case, including solution $(u,v)=(0,0)$.

\begin{remark}\label{RemarkEulerQRL}
We have $\legendre{-3}=1 $ if $p\equiv 1\pmod 6$ and
$\legendre{-3}=-1$  if $p\equiv 5\pmod 6$.
This result is obtained by combining Euler's 
criterion with the quadratic reciprocity law as follows:
\begin{equation*}
    \legendre{-3} = \legendre{-1}\legendre{3} 
    = (-1)^{\frac{p-1}{2}}\cdot (-1)^{\frac{p-1}{2}\cdot \frac{3-1}{2}}
    \left(\frac{p}{3}\right)
    = \left(\frac{p}{3}\right).
\end{equation*}
\end{remark}
In conclusion, we have proved that
\begin{equation}\label{eqN0}
    N^*_p(0) = 
   \begin{cases}
       1 & \text{ if $p\equiv 5\pmod 6$}\\[2pt]
       2p-1 & \text{ if $p\equiv 1\pmod 6$}.
   \end{cases}
\end{equation}

\smallskip
\noindent
\textbf{B.}
Now suppose $l\not\equiv 0\pmod p$. 
First let us note that if $a\not\equiv 0\pmod p$, then
\mbox{$N^*_p(l) = N^*_p(a^2l)$} because the correspondence
$(u,v)\leftrightsquigarrow (au,av)$ is a a one-to-one mapping between
the solutions of the congruences counted in the two sets
$\cM^*_p(l)$ and $\cM^*_p(a^2l)$.
It follows that $N^*_p(l)$ remains constant for all quadratic residues $l$ and still, $N^*_p(l)$ also is constant for all
non-residues $l$. 
We denote by $R$ and $N$ the cardinalities in the two cases:
\begin{equation}\label{eqRN}
  \begin{split}
    R := N^*_p(l) \quad\text{ if $\legendre{l}=1$}\,,
    \quad\text{ and } \quad
    N := N^*_p(l) \quad\text{ if $\legendre{l}=-1$}\,.
  \end{split}
\end{equation}

Next, to find $R$, we may assume that $l=1$.
Let us introduce a parametrization of the curve that defines 
the congruence in $\cM_p^*(1)$, intersecting it with all 
possible lines that pass through the base solution $(1,0)$.
This will determine a second solution, which satisfies the system
\begin{equation}\label{eqSystemCL}
    \begin{cases}
        u^2+3v^2\equiv 1\pmod p\\[4pt]
        r(u-1) \equiv v\pmod p
    \end{cases}
\end{equation}
for $r=0,1,\dots,p-1$.
In addition to the lines in~\eqref{eqSystemCL}, the vertical
line $u=1$ gives only the base solution $(1,0)$.
Note that all solutions in $\cM_p^*(1)$ are obtained in 
this way, and to find their exact number we need to check 
for any repetitions.

\smallskip
If $r=0$, then $v=0$, which gives two solutions 
$(1,0)$ and $(p-1,0)$.

If $r\in\{1,\dots,p-1\}$, then~\eqref{eqSystemCL} implies
$u^2+3r^2(u-1)^2\equiv 1\pmod p$, that is,
\begin{equation}\label{eqSystemCL2}
       (3r^2+1)u^2 -6r^2u +(3r^2-1)\equiv 0\pmod p\,.
\end{equation}

Besides the base solution $u=1$, congruence~\eqref{eqSystemCL2}
has the solution
$u=(3r^2-1)(3r^2+1)^{-1}$, 
provided that $3r^2+1\not \equiv 0\pmod p$.

Let us check if these solutions $u=u(r)$ are distinct for different $r$'s. We see that the equality 
$(3r_1^2-1)(3r_1^2+1)^{-1} = (3r_2^2-1)(3r_2^2+1)^{-1}$,
for distinct $r_1,r_2\in\{0,1,\dots,p-1\}$,
is equivalent with $(r_1 -  r_2)(r_1+r_2)\equiv 0 \pmod p$.
Then, in the only case with uncertainty regarding the possible 
coincidence of solutions, that is when $r_2=p-r_1$, 
we see that the corresponding two $v$'s, that is, $r_1(u-1)$
and $r_2(u-1)$ are distinct because $p\neq 2$, while the case 
$u=1$ was settled before.

If $3r^2+1 \equiv 0\pmod p$, then congruence~\eqref{eqSystemCL2}
reduces to $2u-2\equiv 0\pmod p$, so that system~\eqref{eqSystemCL} 
also has a unique solution, the same $(1,0)$.
This counts only for just two values of $r$ that are
square roots of $(-3)^{-1}$, and it occurs, according to 
Remark~\ref{RemarkEulerQRL}, 
only for \mbox{$p\equiv 1\pmod 6$}.
Thus, checking all $r\in\{0,1,\dots,p-1\}$, 
we find that if $p\equiv 1\pmod 6$, 
the base solution $(1,0)$
appears repeated three times, when $r=0, r_1, r_2$, where
and $r_1$ and $r_2$ are the square roots mod $p$ of 
$(-3)^{-1}$,
and the base solution $(1,0)$ is never repeated otherwise,  
when $(-3)^{-1}$ is not a quadratic residue.

In conclusion, 
we have shown that system~\eqref{eqSystemCL}
has a single solution for $r\in\{1,2,\dots,p-1\}$
and two solutions if $r\equiv 0\pmod p$, 
all together, being in a total of $2+(p-1)=p+1$ distinct 
solutions, provided that $p\not\equiv 1\pmod 6$,
and two less, because of the noted repetitions, otherwise.
Therefore:
\begin{equation}\label{eqR}
    R=N^*_p(1)=
    \begin{cases}
        p-1 & \text{ if $p\equiv 1\pmod 6$}\\[4pt]
        p+1 & \text{ if $p\equiv 5\pmod 6$}.
    \end{cases}    
\end{equation}

To complete the analysis, we still need to treat the case 
when $l$ is a quadratic non-residue, that is, to find $N$ defined by~\eqref{eqRN}.
This can be done by reckoning that $\FF_p^2$ is partitioned
into subsets $\cM_p^*(l)$ grouped by their equal 
cardinalities, as follows:
\begin{equation}\label{eqReckon}
    \FF_p^2 = \cM_p^*(0) \cup
    \bigcup_{\substack{l=1\\ \legendre{l}=1}}^{p-1}\cM_p^*(l)
    \cup
    \bigcup_{\substack{l=1\\ \legendre{l}=-1}}^{p-1}\cM_p^*(l).
\end{equation}
Then, on combining~\eqref{eqN0}, \eqref{eqRN}, 
and the fact that there are an equal number of $(p-1)/2$ of 
quadratic residues and non-residues, from~\eqref{eqReckon}
we find that
\begin{equation*}
    p^2 =  
  \begin{cases}
      2p-1+ \frac{p-1}{2}R + \frac{p-1}{2}N  & \text{ if $p\equiv 1\pmod 6$}\\[6pt]
     1 + \frac{p-1}{2}R + \frac{p-1}{2}N  & \text{ if $p\equiv 5\pmod 6$}.
  \end{cases}
\end{equation*}
On employing~\eqref{eqR}, it follows that
\begin{equation}\label{eqN}
    N = 
  \begin{cases}
       p-1  & \text{ if $p\equiv 1\pmod 6$}\\
       p+1 & \text{ if $p\equiv 5\pmod 6$},
  \end{cases}
\end{equation}
which, compared to~\eqref{eqR}, means that
$N=R$ for all $p$.

To complete the proof of Theorem~\ref{TheoremC} 
for the germs $(0,1,1)$, $(1,0,1)$ and $(1,1,0)$, we
only have to replace the 
values of $N^*_p(l)$, explicitly obtained in relations
\eqref{eqN0}, \eqref{eqRN}, \eqref{eqR} and~\eqref{eqN}, in the definition $\dens_p(l)=N_p^*(l)/p^2$, for
$p\ge 5$ and $0\le l\le p-1$, to obtain the formulas of the densities from the statement of the theorem.

\subsection{The germ \texorpdfstring{$(0,0,0)$}{(0,0,0)}}
Let
\begin{equation}\label{eqNpl}
	N_p(l) = \#\big\{(x,y) : x^2+y^2+xy-x-y \equiv l \pmod p\big\}.
\end{equation}
For any $l$, $0\le l \le p-1$, as seen above, 
the limit density is $\dens_p(l) = N_p(l)/p^2$.
\medskip

\noindent
\texttt{Case $p=2$}.
With $F(x,y)=x^2+y^2+xy-x-y$, and $x,y\in\{0,1\},$
the values modulo~$2$, are $0, 0, 0, 1$, so that 
$\dens_2(0) = 3/4$ and $\dens_2(1) = 1/4$.  
\medskip

\noindent
\texttt{Case $p=3$}.
The values of $F(x,y) \pmod 3$, for $y=0,1,2$, are: $0,0,2$ if $y=0$;
$0,1,1$ if $y=1$; and $2,1,2$ if $y=2$.
Then
\mbox{$\dens_3(0)=\dens_3(1)=\dens_3(2)=3/3^2=1/3$.}

\medskip

\noindent
\texttt{Case $p\ge 5$}.
As before, we change the variables to transform the expression from the left side of the congruence into a canonical quadratic form. First, we multiply the congruence by $4$ and complete the square to get rid of the $xy$ term. Then replacing $x$ by $u$, where $u=2x+y$, the congruence inside~\eqref{eqNpl} becomes 
\begin{equation*}
	u^2+3y^2-2u-2y \equiv 4l \pmod p\,.
\end{equation*}
Next we complete the square to get rid of the linear term $-2u$. For this, we replace $u$ by~$w$, where $w=u-1$, and the above congruence becomes
 \begin{equation*}
	w^2+3y^2 -2y \equiv 4l+1 \pmod p\,.
\end{equation*}
Now we eliminate the remaining linear term by multiplying the congruence 
by $3$ and completing the square. We replace $y$ by the new variable
$v$, where $v=3y-1$, and arrive at the new form of the congruence
 \begin{equation*}
	3w^2+v^2 \equiv 4(3l+1) \pmod p\,.
\end{equation*}
In the end, we can go back to the original variables by putting
$x=2^{-1}v$ and $y = 2^{-1}w$.
Considering the fact that $p\neq 2,3$, the solutions of the 
several congruences above are in a one-to-one correspondence with each 
other, the number of solutions counted by $N_p(l)$ in~\eqref{eqNpl}
equals $N^*_p(3l+1)$, the number of solutions of the last new congruence, 
where
\begin{equation}\label{eqNp3l}
    N^*_p(3l+1) = \#\big\{(x,y) : x^2+3y^2 \equiv 3l+1 \pmod p\big\}.
\end{equation}
This is the same as in the case of germ $(0,1,1)$, except that $l$ is replaced by $3l+1$.
Adapting the analysis from there, the cases are distinguished 
depending on whether $3l+1\equiv 0\pmod p$ or $3l+1\not\equiv 0\pmod p$.

Note that $(-3)^{-1}=\frac{p-1}{3}$ if $p\equiv 1 \pmod 6$
and $(-3)^{-1}=\frac{2p-1}{3}$ if $p\equiv 5 \pmod 6$.

\medskip

If $3l+1\equiv 0\pmod p$, then the number of solution of the congruence
in~\eqref{eqNp3l}, which becomes $ x^2\equiv -3y^2 \pmod p$, 
depends on whether $-3$ is a quadratic residue or not modulo~$p$.

If $\legendre{-3}=-1$, then there is only one solution at $(x,y)=(0,0)$,
so that $\dens_p\big(\frac{p-1}{3}\big)=\frac{1}{p^2}$
or $\dens_p\big(\frac{2p-1}{3}\big)=\frac{1}{p^2}$, 
depending on the case where $p\equiv 1 \pmod 6$ or $p\equiv 5 \pmod 6$.

If $\legendre{-3}=1$, then for each $x\in\{1,\dots, p-1\}$,
there are two distinct solutions $(x,\pm y)$ for $y\in\{1,\dots, p-1\}$,
plus one more $(0,0)$, resulting in a total of $2p-1$ solutions.
Therefore, 
$\dens_p\big(\frac{p-1}{3}\big)=\frac{2p-1}{p^2}$ or
$\dens_p\big(\frac{2p-1}{3}\big)=\frac{2p-1}{p^2}$, 
depending on the case where $p\equiv 1 \pmod 6$ or $p\equiv 5 \pmod 6$.

It remains to be seen if this is consistent with the statement of the theorem, and this can be checked because condition
$\legendre{-3}=1$ holds if and only of $3$ divides $p-1$
(which reduces in our case to $p\equiv 1\pmod 6$).
The last equivalence follow 
by combining Euler's criterion with the quadratic reciprocity because 
law, because
\begin{equation*}
    \legendre{-3} = \legendre{-1}\legendre{3} 
    = (-1)^{\frac{p-1}{2}}\cdot (-1)^{\frac{p-1}{2}\cdot \frac{3-1}{2}}
    \left(\frac{p}{3}\right)
    = \left(\frac{p}{3}\right).
\end{equation*}
This conclude the proof of Theorem~\ref{TheoremC}.

\section{The number tiling of the plane and the proof of Theorem~\ref{TheoremA}}\label{SectionRuleD}
The aim of this section is to prove Theorem~\ref{TheoremA}.
To do so, we will begin by offering a more detailed account of the tiling coverage of the plane with integers,
which will lead to a parameterization of all nodes within the triangular network $\cT_H(a,b,c)$.

\subsection{A parametrization of the represented integers}\label{SectionParametrization}
Let $a,b,c$ be fixed integers.
In order to determine which elements belong to the sequence of integers represented 
in $\cR_H(a,b,c)$, we need to describe the structure of the set of triples
$\cT_H(a,b,c)$. 
In this set, although the components of the triples appear multiple times in different 
triples, one can track the step-by-step generation of triples starting from 
the basic germ $(a,b,c)$.

We start building an interlinked network of numbers based at the nodes of a~$2$-dimensional  
triangular network by placing $a,b,c$ in the nodes of one of 
the smallest equilateral triangles of the grid. This is the the base triangle 
that stores the germ $(a,b,c)$. We will identify and then refer to the nodes by the 
integers they contain.
The integers in the nodes will be called \textit{weights.}
Then, we place the weight 
$d = H'(a,b,c) = -a + b+c+1$ in the fourth corner of the lozenge $abdc$ with the shorter
diagonal $bc$.

\begin{figure}[ht]
 \centering
 \includegraphics[width=0.35\textwidth]{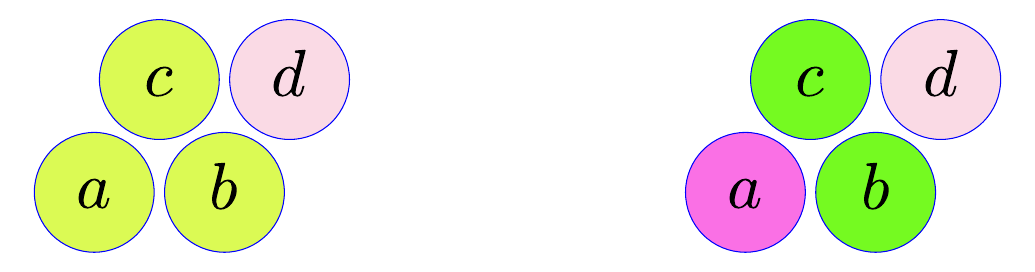}
\caption{The lozenge expansion of $d$ from $a$ across
the short diagonal~$bc$.
}
 \label{FigRuleD}
 \end{figure}
The process of expanding towards a future node from a previous one, by placing the child-integer 
symmetrically across a side of a triangle, as shown in Figure~\ref{FigRuleD}, 
is then iterated to generate new integers
across any side of the triangles in the current generation. 
The expansion can be done in any order using either of 
the corresponding
operators $H',H'',H'''$, with some rule, such as 
to produce a certain linear development in a given direction, 
randomly, or in a circular fashion, as shown by the 
individual steps from Figure~\ref{FigConsistencyHex}. 
(The circular spiral construction is also employed 
in generating the first two circular annuli around 
the base germs $(0,0,0)$ and $(0,1,1)$ in 
Figure~\ref{FigureSpirals}.)
\begin{figure}[ht]
 \centering
 \includegraphics[width=0.99\textwidth]{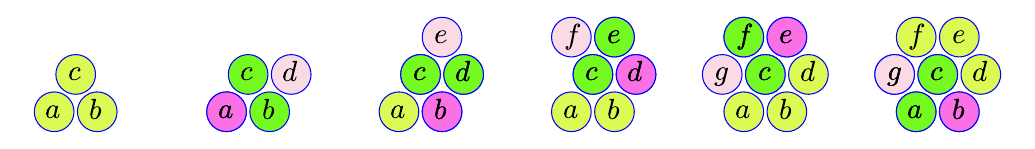}
\caption{Consistency check on the immediately adjacent hexagonal path around the initial triangle $abc$. 
One finds the same value of $g$ regardless of whether one goes around in trigonometric direction in four
steps or vice versa in just one step.
}
 \label{FigConsistencyHex}
 \end{figure}

We need to ensure that this procedure can be repeated indefinitely to fill 
all the nodes in the triangular network covering the plane. 
The construction must be consistent, meaning that the same weights are obtained 
regardless of the path taken to reach the nodes. 
To verify this, we will check the correctness on the minimal paths around a node 
and use the fact that $H', H''$ and $H'''$ are involutions (as we know from Proposition~\ref{PropositionInvolutionNonComute}), which allows to go back and forth on the paths. 
This will guarantee a unique and consistent filling of the nodes of the entire triangular tiling, 
as we will then have established step by step the uniqueness 
of the weights on any path between two fixed nodes.

In counterclockwise direction, the weights in Figure~\ref{FigConsistencyHex} are:
\begin{equation}\label{eqg1}
\begin{split}
    d & = H'(a,b,c) = -a + b+c+1\\ 
    e & = H''(d,b,c) = -b + d+c+1
	= -a +2c +2 \\ 
     f & = H'(d,e,c) = -d + e+c+1
	= -b +2c +2 \\ 
    g & = H''(f,e,c) = -e + f+c+1
	= a-b +c +1, \\ 
\end{split} 
\end{equation}
and then clockwise:
\begin{equation}\label{eqg2}
    \begin{split}
    g & = H''(a,b,c) = -b + a+c+1\,.
    \end{split} 
\end{equation}
Since the same weight $g$ is produced on both paths~\eqref{eqg1} and~\eqref{eqg2},
we have proven the consistency of the construction.

\begin{remark}\label{Remark1}
   The symmetries in relations~\eqref{eqHHH} and~\eqref{eqg1} imply:
\begin{enumerate}
    \item    
The symmetric correspondence of the formulas of $e$ and $f$ in~\eqref{eqg1} 
reflect the position of nodes $a$ and $b$ in Figure~\ref{FigConsistencyHex}.
    \item    
Let $x,y,z$ be the weights of three consecutive adjacent nodes in the triangular network. 
Then  $z=-x+2y+2$.
\end{enumerate}
\end{remark}

Iterating the second part of Remark~\ref{Remark1}, it follows
that the weights that follow $x$ and $y$ in straight line are:
$-x+2y+2$, $-2x+3y+6$, 
  $-3x+4y+12$,  $-4x+5y+20$, and so on.
Then, by induction, we obtain the following result.
\begin{lemma}\label{LemmaSL}
    The weights of the nodes on the straight line determined by 
the adjacent nodes $x$ and~$y$ are 
\begin{equation}\label{eqFSL}
        L(x,y \mid k) = -(k-1)x +ky+k(k-1)\ \ \text{ for $k\ge 0$.}    
\end{equation}    
\end{lemma}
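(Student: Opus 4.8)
\textbf{Proof plan for Lemma~\ref{LemmaSL}.}
The plan is to prove the closed form~\eqref{eqFSL} by induction on $k$, using the three-term recurrence from Remark~\ref{Remark1}(2) as the engine. First I would fix the adjacent nodes $x$ and $y$ and set $L_k := L(x,y\mid k)$ for $k\ge 0$, with the understanding that $L_0 = x$ and $L_1 = y$ are the two starting weights on the line. The base cases are immediate: the formula $-(k-1)x+ky+k(k-1)$ evaluates to $x$ at $k=0$ and to $y$ at $k=1$. For the inductive step, suppose the formula holds for all indices up to $k$; then the node following $L_{k-1}$ and $L_k$ in the straight line is, by Remark~\ref{Remark1}(2) applied to the three consecutive weights $L_{k-1}, L_k, L_{k+1}$,
\begin{equation*}
  L_{k+1} = -L_{k-1} + 2L_k + 2.
\end{equation*}
Substituting the inductive hypotheses $L_{k-1} = -(k-2)x+(k-1)y+(k-1)(k-2)$ and $L_k = -(k-1)x+ky+k(k-1)$ and simplifying, the $x$-coefficient becomes $(k-2) - 2(k-1) = -k$, the $y$-coefficient becomes $-(k-1)+2k = k+1$, and the constant becomes $-(k-1)(k-2) + 2k(k-1) + 2 = (k-1)(k+1) + 2$; a short check shows $(k-1)(k+1)+2 = k^2+1 = (k+1)k - k + 1$, which is not yet in the target shape, so I would instead track the constant as $(k+1)k$: expanding, $-(k-1)(k-2) + 2k(k-1) + 2 = (k-1)(2k - k + 2) + 2 = (k-1)(k+2) + 2 = k^2 + k = (k+1)k$. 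This matches $-(k)x + (k+1)y + (k+1)k = L((x,y\mid k+1))$, closing the induction.

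The only genuinely delicate point is the bookkeeping in the constant term of the recurrence; the coefficients of $x$ and $y$ are linear and cause no trouble, but the constant involves a product that must be massaged into the form $k(k-1)$ rather than $k^2 + 1$ or similar, so I would be careful to expand $-(k-1)(k-2) + 2k(k-1) + 2$ by factoring out $(k-1)$ first. Everything else is routine: Remark~\ref{Remark1}(2) is exactly the linear two-step recurrence, and the fact that the construction is consistent (already established via the hexagonal check in Figure~\ref{FigConsistencyHex}) guarantees that "the weights of the nodes on the straight line" is well-defined, so no ambiguity arises in what $L(x,y\mid k)$ denotes. Thus the induction is self-contained once the base cases $k=0,1$ and the recurrence are in hand.
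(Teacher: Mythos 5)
Your proof is correct and follows essentially the same route as the paper, which likewise derives the formula by iterating the recurrence $z=-x+2y+2$ of Remark~\ref{Remark1}(2) and concluding by induction; your verification of the inductive step (coefficients $-k$, $k+1$, and constant $(k-1)(k+2)+2=k(k+1)$) is accurate. The only blemish is the transient miscalculation $(k-1)(k+1)+2$ for the constant term, which you immediately correct, so the final argument stands.
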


Now, just using the one-dimensional parametrization~\eqref{eqFSL}, one can prove 
Theorem~\ref{TheoremA}. For this it suffices to check the weights on the sets 
of parallel half-lines on the six directions rotated by $60$ degrees around any fixed 
origin, and note that the size of the quadratic term $k^2-k$ allows for at most 
a finite number of negative weights around the origin.

A complete two-dimensional parametrization of the weights is given in the next section.

\subsection{Two-dimensional parametrization of the weights}\label{subsection2D}

Let $\varrho = 1/2+\sqrt{3}i/2$
be a root of order $6$ of $1$. 
Any node of the triangular lattice can then be uniquely expressed using the base $\{1,\varrho\}$ 
as the linear combination $m+n\varrho$ with $m,n\in\ZZ$. Thus, the nodes of the network are 
the elements of the set
\begin{equation*}
   \begin{split}
   \cH :=    \{m+n\varrho : m,n\in\ZZ\}\,.
   \end{split}   
\end{equation*}
Let $G(a,b,c \mid m,n)$ denote the value of the weight associated to the node $m+n\varrho$ 
in the triangular network generated by the base triangle of weights $a$,$b$, and $c$.

To compare, it should be noted that $G$ 
must be two-dimensional and dependent on $c$, while 
$L$ is one-dimensional and independent of $c$, and they are related by
\begin{equation*}
    L(a,b \mid m) = G(a,b,c \mid m,0) \ \ 
    \text{ for any $a,b,c,m\in\ZZ$.}
\end{equation*}

The next theorem provides the 
closed-form expression of \mbox{$G(a,b,c \mid m,n)$.}
\begin{theorem}\label{LemmaDL}
   For any base triple $(a,b,c)$ and any integers $m,n$, we have
\begin{equation}\label{eqFmn}
   G(a,b,c\mid m,n) = -(m+n-1)a  + mb + nc + (m^2+ n^2 +mn- m -n)\,.    
\end{equation}
\end{theorem}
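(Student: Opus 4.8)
The plan is to prove the closed form \eqref{eqFmn} by a double induction on the pair $(m,n)$, using the one-dimensional result already available in Lemma~\ref{LemmaSL} as the base case and the ``three consecutive collinear nodes'' identity from Remark~\ref{Remark1}(2) as the inductive engine. First I would record the base case: by the relation $L(a,b\mid m)=G(a,b,c\mid m,0)$ together with \eqref{eqFSL}, formula \eqref{eqFmn} holds whenever $n=0$, since $-(m-1)a+mb+0\cdot c+(m^2-m)$ is exactly $L(a,b\mid m)$. By the obvious symmetry of the lattice (swapping the roles of $b$ and $c$ corresponds to reflecting the $(m,n)$-grid across the line $m=n$, and the right-hand side of \eqref{eqFmn} is symmetric under $(m,n,b,c)\mapsto(n,m,c,b)$), the formula also holds when $m=0$. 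These two coordinate axes of $\cH$ anchor the induction.

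Next I would set up the inductive step. Fix a direction in the lattice — say the direction in which the first coordinate increases — and consider three collinear adjacent nodes with lattice coordinates $(m-1,n)$, $(m,n)$, $(m+1,n)$. By Remark~\ref{Remark1}(2), their weights $x,y,z$ satisfy $z=-x+2y+2$, i.e.
\begin{equation*}
   G(a,b,c\mid m+1,n) = -\,G(a,b,c\mid m-1,n) + 2\,G(a,b,c\mid m,n) + 2\,.
\end{equation*}
Assuming \eqref{eqFmn} holds for $(m-1,n)$ and $(m,n)$, substitute and simplify: the $a$-coefficient becomes $(m-1+n-1) - 2(m+n-1) = -(m+1+n-1)$, the $b$- and $c$-coefficients become $-(m-1)+2m = m+1$ and $-n+2n = n$, and the quadratic part $-\big((m-1)^2+n^2+(m-1)n-(m-1)-n\big) + 2\big(m^2+n^2+mn-m-n\big) + 2$ collapses to $(m+1)^2+n^2+(m+1)n-(m+1)-n$ after expansion. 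This is precisely \eqref{eqFmn} with $m$ replaced by $m+1$, so the recurrence propagates the formula along every horizontal line. A symmetric argument (or the $b\leftrightarrow c$ reflection) propagates it along every line in the second coordinate direction, and since from the two axes one can reach every node of $\cH$ by successive unit steps in these two directions — first walking out along the $n=0$ axis to $(m,0)$, then up to $(m,n)$ — the formula holds for all $m,n\in\ZZ$, including negative values, because the three-term recurrence is symmetric and lets us step in either direction.

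The one genuine subtlety — which is more a matter of being careful than a real obstacle — is that Remark~\ref{Remark1}(2) was stated for three consecutive nodes on an arbitrary straight line of the grid, so one must be sure that the two coordinate directions of the basis $\{1,\varrho\}$ are indeed lattice line directions in which the remark applies, and that the orientation issue (which of $x,z$ is ``ahead'') does not matter; it does not, because the identity $z=-x+2y+2$ is symmetric in $x$ and $z$, which is exactly what makes the induction run in both directions and cover negative indices. One should also double-check the consistency already granted in Section~\ref{SectionRuleD}: since the network is well-defined (the weight at each node is path-independent), the value $G(a,b,c\mid m,n)$ is unambiguous, so verifying the formula along one convenient family of paths suffices. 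I expect the bookkeeping in the quadratic term of the inductive step to be the most error-prone part, but it is a routine polynomial identity; the structural content of the proof is entirely carried by Lemma~\ref{LemmaSL} and Remark~\ref{Remark1}(2).
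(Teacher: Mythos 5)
Your route is genuinely different from the paper's. The paper does not induct: it applies the one-dimensional Lemma~\ref{LemmaSL} three times along a ``parallelogram rule'' --- first $x=L(a,b\mid m)$ on the line $ab$, then $d=H'(a,b,c)$, then $y=L(c,d\mid m)$ on the parallel line $cd$, and finally $z=L(x,y\mid n)$ --- and reads off the closed form directly. Your plan instead uses the three-term recurrence $z=-x+2y+2$ of Remark~\ref{Remark1}(2) as a two-dimensional induction engine seeded on the coordinate axes. The algebra of your inductive step is correct (I checked the coefficient of $a$ and the quadratic part; both propagate), and the observation that the recurrence is symmetric in $x$ and $z$, hence runs in both directions and covers negative indices, is exactly right. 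Both proofs ultimately lean on the same consistency fact from Section~\ref{SectionRuleD}; the paper's version is shorter because it never has to worry about how an induction is seeded.

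That seeding is where your argument has a genuine gap. The recurrence $G(m+1,n)=-G(m-1,n)+2G(m,n)+2$ is a \emph{three-term} recurrence: to start it along a line you need \emph{two adjacent} known values on that line, not one. Your two base axes $\{n=0\}$ and $\{m=0\}$ give you only the single value $G(m,0)$ on the vertical line through $(m,0)$ when $m\neq 0$, and only the single value $G(0,n)$ on the horizontal line through $(0,n)$ when $n\neq 0$; so the step ``then up to $(m,n)$'' cannot begin. The fix is cheap but must be stated: verify the formula at one more node off the axes, most naturally $(1,1)$, where the construction forces the weight $H'(a,b,c)=-a+b+c+1$ and the right-hand side of \eqref{eqFmn} indeed evaluates to $-a+b+c+1$. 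With $(1,0)=b$ and $(1,1)$ in hand, the recurrence fills the entire line $m=1$; then every horizontal line carries the two adjacent known values $(0,n)$ and $(1,n)$, and the horizontal recurrence fills the plane. With that one extra seed your proof is complete.
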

\begin{proof}
Let $m,n\in\ZZ$ and let $a,b,c$ be fixed real numbers.
While the linear expansion in Lemma~\ref{LemmaSL} uses the weight $c$ just as a support to
generate the weights on the line $ab$, here we generate and also record all the weights 
on the line that starts at $c$ and is adjacent to~$ab$. 
Then, iterating the process, the general formula will follow.
\begin{figure}[ht]
 \centering
    \includegraphics[width=0.69\textwidth]{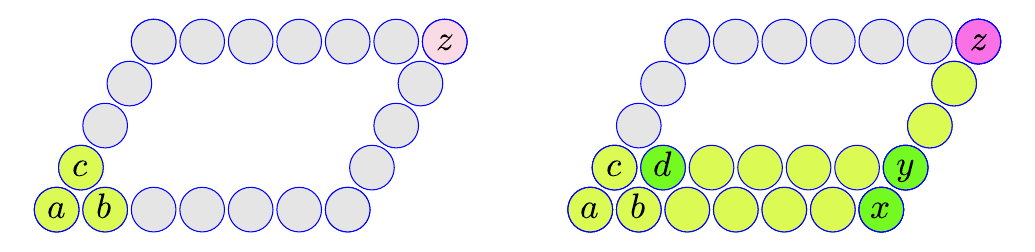}
\caption{The \emph{parallelogram rule} used to obtain the closed formula for 
$G(a,b,c \mid m,n)$. In order to find $z$ from the opposite corner of $a$ 
and having the coordinates $m,n$ in base $\{1,\varrho\}$, 
one applies Lemma~\ref{LemmaSL} a few times to find $x$, 
the $m$-th element on the line $ab$, 
then $d$, with $d=H'(a,b,c)$, next $y$, the $m$-th element on the line $cd$, 
and finally one arrives at $z$, the $n$-th element on line~$xy$.
}
 \label{FigGeneralFormula}
 \end{figure}
 
Let $z=G(a,b,c \mid m,n)$ be a short notation of the unknown variable 
for the fixed variables $a,b,c,m,n$.
Next we use Figure~\ref{FigGeneralFormula} as a support in the following successive steps:
\begin{enumerate}
   \item 
   By~\eqref{eqFSL} we find $x=L(a,b \mid m)$.
      \item 
    Then $d = H'(a,b,c) = -a+ b+c +1$.
         \item 
    Next, by~\eqref{eqFSL} we find $y=L(c,d \mid m)$.
            \item 
   Finally, by~\eqref{eqFSL} again we find $z=L(x,y \mid n)$.
\end{enumerate}
Then writing $x,y$, and $z$ in terms of $a,b,c$, we find
\begin{equation*}
   \begin{split}
   x =&  -(m-1)a + mb + m(m-1),\\
   y =&  -(m-1)c + md + m(m-1)
   =  -ma +mb +c + m^2,\\
   \end{split}   
\end{equation*}
so that
\begin{equation*}
   \begin{split}
   z =&  -(n-1)x + ny + n(n-1) \\
     =& -(m+n-1)a  + mb + nc + (m^2+ n^2 +mn- m -n)\,.
   \end{split}   
\end{equation*}
The last line gives us the needed formula for $z=G(a,b,c \mid m,n)$, which completes the proof of the theorem.
\end{proof}

The operators $H',H''$, and $H'''$ are not linear because of the constant $1$ that is added in their definition.
However, by subtracting component-wise the triples obtained as images of 
triples with one equal component on the same position obtained through a composition of a finite sequence of 
$H', H''$ and $H'''$, the constant term cancels. As a result we obtain a linear function with integer coefficients 
that has a zero at a lattice point.
Consequently, it will have infinitely many zeros at other lattice points equally spaced on a straight line, 
as noted in the example before Remark~\ref{Remark11}.
Alternatively, the same result can be achieved by using the complete characterization 
of the weights from Theorem~\ref{LemmaDL}.

\setlength{\belowcaptionskip}{-10pt}
\begin{figure}[ht]
 \centering
  \includegraphics[width=0.85\textwidth]{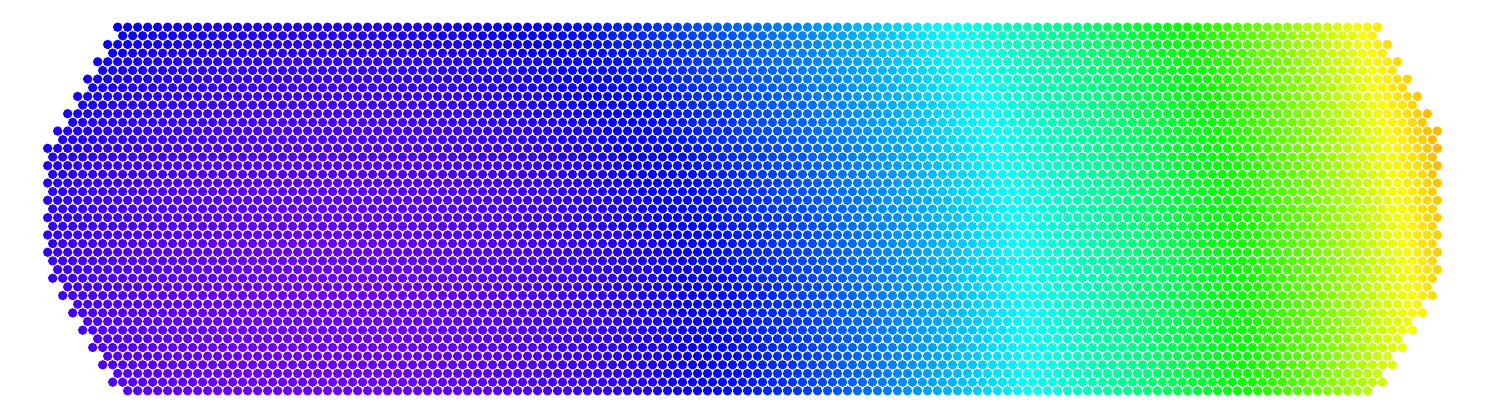}
\caption{
The spectrum of colors corresponding to the weights of $5801$ nodes generated by~\eqref{eqHHH} in a rectangular neighborhood of the initial triangle with weights $2023,2109,2092$.
} 
 \label{FigSpectrum}
 \end{figure}
\subsection{Final notes}\label{subsectionSpectrum}
Except for a few special cases, regardless of the initial triangle, the size of the numbers 
generated by~\eqref{eqHHH}, shows a wide spectrum of nuances.
A colored representation of the weights along and across the plane and a few explicit values of 
the weights are shown in Figure~\ref{FigSpectrum} and Figure~\ref{Fig2Examples}, respectively.
Two examples of an extract of the network of weights generated by the initial triangles $4,7,5$ and 
$2023,2109,2092$, respectively, are shown in Figure~\ref{Fig2Examples}.
It should be noted that although the numbers that generate the spectrum are discrete, 
it has a continuum appearance without a distinct break separating the colors.
Also, the `visual spectrum' is similar all over the plane, 
regardless of the area where one makes an analogous representation.
%
\begin{figure}[ht]
    \includegraphics[width=0.49\textwidth]{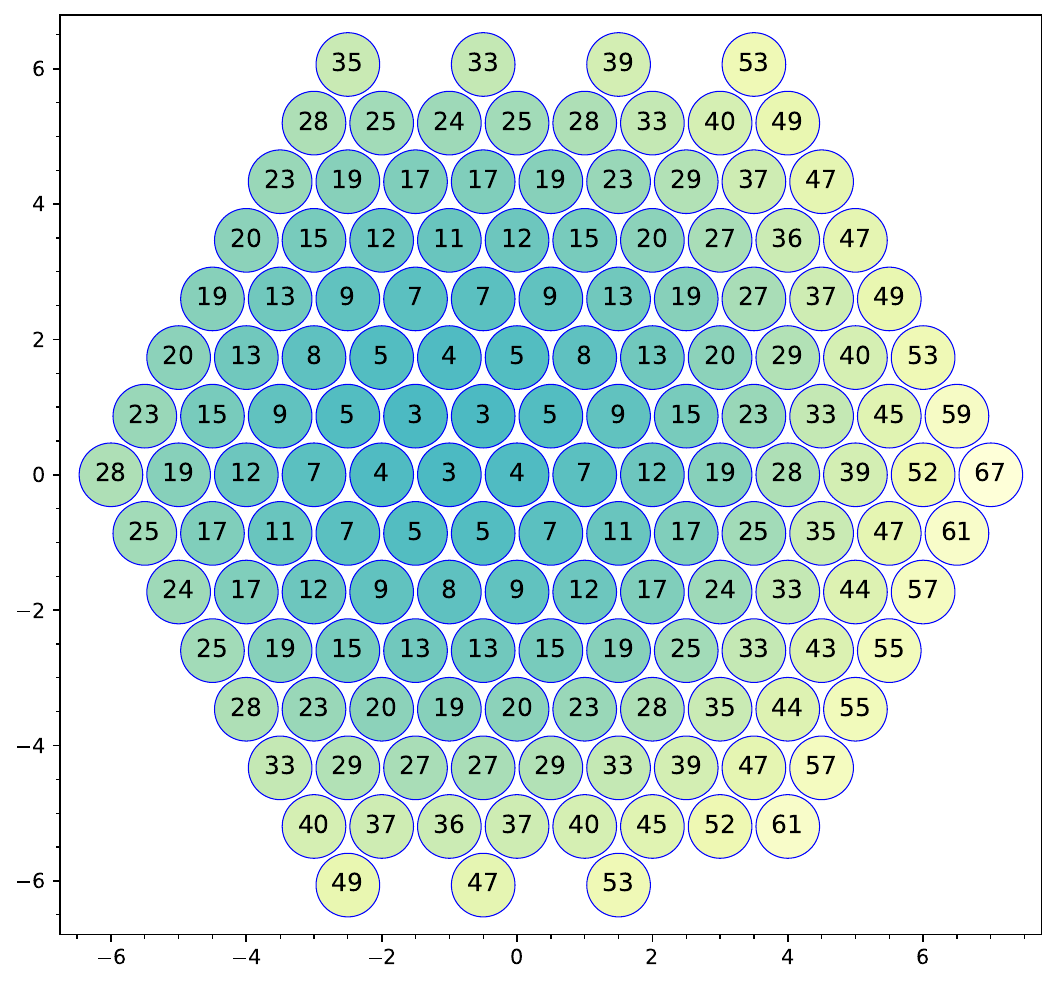}
    \includegraphics[width=0.476\textwidth]{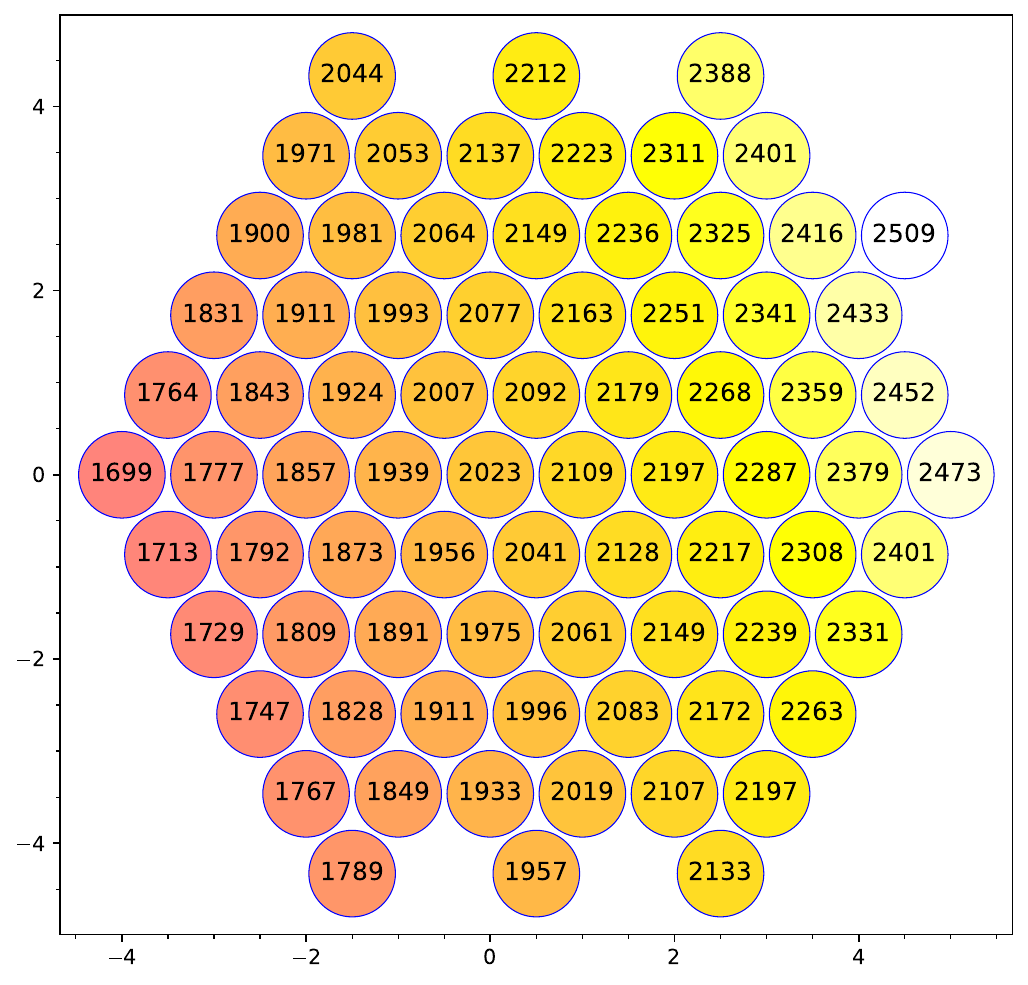}
\caption{Triangular lattices filled recursively in the nodes by weights given by 
relations~\eqref{eqHHH} across any short diagonal of a lozenge.
The image on the left shows $167$ nodes whose maximum value is $67$, all of which are generated 
by the initial triangle with weights of $4,5,7$.
In the image on the right there are shown only $77$ nodes with weights between $1699$ and $2509$.
Color intensity shades indicate the size of the numbers in the nodes to distinguish them from each other.
}
 \label{Fig2Examples}
 \end{figure}

\subsubsection{Evaluation of the number of negative weights}\label{subsectionEvaluationNegativeWeights}
Let us find the path from $(0,0,c)$, where $c\ge 0$, to the minimum triple in the covering that contains $(0,0,c)$.
Along the way we will meet three remarkable sequences and also find the tower to which $(0,0,c)$ belongs to. 

Let $H$ be the composition operator defined by $H:=H'\circ H''\circ H'\circ H'''$.
A direct calculation shows that, for any triple $(a,b,c)$ with $a=b$, we have
\begin{equation*}
    H(a,a,c) = \big( 3a-2c+4, 3a-2c+4, 2a-c+1  \big)\,.
\end{equation*}

This is a zigzag operator whose iterations define the shortest path in the triangular network 
from $(0,0,c)$ towards the center. (The \textit{center} is the triple in the network
whose components are the smallest and closest to each other.)
Iterating the application of $H$, by induction we find that 
\begin{equation}\label{eqH2}
    \begin{split}
     H^{[n]}(a,a,c) = \big(& (2n+1)a-2nc+n(3n+1),\\ 
     &(2n+1)a-2nc+n(3n+1),\ \\ 
     &2na-(2n-1)c+n(3n-2)  \big)   
    \end{split}
\end{equation}
for $n\ge 1$.

\begin{remark}\label{RemarkRemarkableSequences}
Formula\eqref{eqH2} employs two remarkable sequences if $a=c=0$.

The first one is $4, 14, 30$, $52, 80$, $114$, $154$, $200$, $252$, 
$310,\dots$ and
the formula for the $n$-th element is $\mathtt{NE}(n):= n(3n+1)$ for $n\ge 1$.
It is the so-called sequence of the
\textit{twice second pentagonal numbers}~\cite[{\href{https://oeis.org/A049451}{A049451}}]{oeis}.

The second sequence is that of \textit{octagonal} or the 
\textit{star numbers}~\cite[{\href{https://oeis.org/A000567}{A000567}}]{oeis},
with elements $1, 8, 21, 40, 65, 96, 133, 176, 225, 280,\dots$,
and the formula for the $n$-th element given by $\mathtt{SW}(n) := n(3n-2)$
for $n\ge 1$ (see~\cite{CP2023} for the occurrence of this sequence in the context of some polytopal conjectures for Coxeter groups). 

We note that creating a spiral pattern by arranging the natural numbers,
starting from the center and moving towards the bottom left, and then 
in a thread-like manner around an ever-expanding hexagon,
$\mathtt{NE}(n)$ is the sequence of numbers on the radius formed from the center 
towards the North-East corner, while 
$\mathtt{SW}(n)$ represents the sequence formed on the opposite radius 
starting from the center towards the South-West corner.
\end{remark}
\noindent
\texttt{Case $c\equiv 0\pmod 3$.}
Starting with $(0,0,c)$, the iterations~\eqref{eqH2} lead to a triple
of the form $(s,s,s)$. For this to happen, the necessary number of steps can be deduced from the 
equality
\begin{equation*}
    -2nc+n(3n+1) = -(2n-1)c+n(3n-2),
\end{equation*}
which implies $n = \frac c3$. Then the triple with all components equal is
\begin{equation}\label{eqMin-c0}
    \left( -\sdfrac{1}{3}(c^2-c),
    -\sdfrac{1}{3}(c^2-c),
    -\sdfrac{1}{3}(c^2-c)
     \right).
\end{equation}
Translating up this triple by $\sdfrac{1}{3}(c^2-c)$, we arrive at the germ
$(0,0,0)$. 
Employing Theorem~\ref{TheoremGerms}, it follows that if $c\equiv 0 \pmod 3$, then
$-\sdfrac{1}{3}(c^2-c)$ is the minimum weight in a node of the triangular network that contains the
triple $(0,0,c)$.

\medskip
\noindent
\texttt{Case $c\equiv 1\pmod 3$.}
Starting from $(0,0,c)$ and applying the iterations of $H$, 
we arrive next to the center of the triangular network at a triple of the form
$(s,s,s+1)$. Employing formula~\eqref{eqH2}, we see that this occurs
if
\begin{equation*}
    -2nc+n(3n+1) + 1 = -(2n-1)c+n(3n-2),
\end{equation*}
that is, $n = \frac{c-1}{3}$.
Then, applying the operator $H'''$ to this last triple, we arrive at the 
triple with all components equal, triple given by the same formula in~\eqref{eqMin-c0}.
Then, as in the previous case, translating the triple up by $\sdfrac{1}{3}(c^2-c)$, we arrive at the germ
$(0,0,0)$, so that, by Theorem~\ref{TheoremGerms}, it follows that if $c\equiv 1 \pmod 3$, then
$-\sdfrac{1}{3}(c^2-c)$ is the minimum weight in a node of the triangular network that contains the
triple $(0,0,c)$.

\medskip
\noindent
\texttt{Case $c\equiv 2\pmod 3$.}
Proceeding as in the previous cases, one finds that the first triple that is 
close to the center and is in the image of the iterations of $H$
has the form $(s,s,s+2)$. 
In order for this to happen, we need to take $n = \frac{c-2}{3}$.
Then, applying $H'''$ on the triple in~\eqref{eqH2} with  $n = \frac{c-2}{3}$, we obtain
\begin{equation}\label{eqMin-c2}
    \left( -\sdfrac{1}{3}(c^2-c - 2),
    -\sdfrac{1}{3}(c^2-c - 2),
    -\sdfrac{1}{3}(c^2-c + 1)
     \right),
\end{equation}
triple which translated up by $\sdfrac{1}{3}(c^2-c + 1)$
is seen to belong to the tower of germ $(1,1,0)$. 

In conclusion, by Theorem~\ref{TheoremGerms} it follows that 
if $c\equiv 2 \pmod 3$, then
$-\sdfrac{1}{3}(c^2-c + 1)$ is the minimum weight in a node of the triangular network that contains the
triple $(0,0,c)$.
   

\medskip
Putting together the results in the tree cases, we find that 
the minimum weight of a node in the triangular network that contains
the triple $(0,0,c)$ is $-\left\lfloor\sdfrac{1}{3}(c^2-c+1)\right\rfloor$ for
any $c\ge 1$.
The sequence of the absolute values of the minima is
$0, 1, 2, 4, 7, 10, 14, 19, 24, 30, 37,\dots$, the remarkable 
sequence~\cite[{\href{https://oeis.org/A007980}{A007980}}]{oeis}.
Among the many properties it has (see~\cite{RG2021}), we mention that it 
equals the number of partitions of $2n$ into at most three parts,
and it is
the number of linearly-independent terms at $2n$-th order 
in the power series expansion of a trigonal rotational-energy-surface
that describes the potential energy of a molecule 
as it undergoes rotational motion around a trigonal axis.

\begin{remark}\label{RemarkEnd}
    Based on the analysis above, and in accordance with Theorem~\ref{TheoremA},
it is possible to make an accurate estimation and even calculate algorithmically
the number of nodes with negative weights.
Thus, since the considered triples $(0,0,c)$, with $c\ge 0$,
are located in the triangular network they generate on the boundary 
that separates negative weights from positive ones, 
and since the minimum triple is in the center,
it follows that the number of negative weights is 
asymptotically equal to 
$\frac{2\pi}{3\sqrt{3}}c^2$ 
as $c\to\infty$,
at any level, in both towers that contain the considered triples.
\end{remark}
In order to give a sense of scale, we mention that the
minimum weight in a node of the triangular network that contains 
$(0,0,c)$, with $c=100$, is $-3300$, 
and there are precisely $11946$ negative weights in the nodes.
Then, the ratio between the number of negative weights 
and the just mentioned approximation
is, in this case, only
$11946 \cdot \frac{3\sqrt{3}}{2\pi c^2}\approx 0.98793$
instead of the limit $1$, which will be attained as $c$ tends to infinity.


\bigskip
\subsection*{Conclusion}
We have introduced a three folded operator~\eqref{eqHHH}, which, starting with a 
triple of integers and moving back and forth from any point in any direction,
generates a triangular tessellation of the plane 
with integers in the nodes, called weights. This operator further leads to a natural relation that defines 
as equivalent tessellations obtained from each other through a translation.
As a consequence, we obtain exactly four classes of equivalence of
tessellations organized in four towers, generated in their very beginnings by
$(0,0,0)$, on one hand, and the more closely related
the other three built around $(0,1,1)$, $(1,0,1)$ 
and $(1,1,0)$, on the other hand.
We have obtained a characterization of the weights
(see Theorems~\ref{TheoremA},~\ref{TheoremGerms},~\ref{LemmaDL}) and the densities of their distribution in residue classes modulo a prime number (Theorem~\ref{TheoremC}), 
and along the way, we have encountered remarkable sequences that reveal intricate patterns concealed within the networks.
This has the potential to offer a fresh perspective or open up an alternative approach
to the practical applications in which they appear, including the geographical model known as the
Central Place Theory~\cite{Bat2013, Mar1977}, recently enhanced with a fractal insertion~\cite{AA1989,BGNR2023},
in a molecule or a virus model~\cite[Chapter~3]{Pet1998},
at the intersection between arts and mathematics~\cite{Rus2017},
or in the modern communication systems (see~\cite{KRNG2024} and the references therein).
Additionally, it could be worthwhile to continue studying the potential implications
of the tesselations with integers studied above in relation to the special lozenge tilings~\cite{CF2023, Ciucu2005}
and their implications in two dimensional electrostatics.



\end{document}